\DeclareFontFamily{U}{BOONDOX-calo}{\skewchar\font=45 }
\DeclareFontShape{U}{BOONDOX-calo}{m}{n}{
  <-> s*[1.05] BOONDOX-r-calo}{}
\DeclareFontShape{U}{BOONDOX-calo}{b}{n}{
  <-> s*[1.05] BOONDOX-b-calo}{}
\DeclareMathAlphabet{\mathcalboondox}{U}{BOONDOX-calo}{m}{n}
\SetMathAlphabet{\mathcalboondox}{bold}{U}{BOONDOX-calo}{b}{n}
\DeclareMathAlphabet{\mathbcalboondox}{U}{BOONDOX-calo}{b}{n}
\CheckCommand*\refstepcounter[1]{\stepcounter{#1}%
    \protected@edef\@currentlabel
       {\csname p@#1\endcsname\csname the#1\endcsname}%
  }
  \renewcommand*\refstepcounter[1]{\stepcounter{#1}%
    \protected@edef\@currentlabel
      {\csname p@#1\expandafter\endcsname\csname the#1\endcsname}%
  }
  \def\labelformat#1{\expandafter\def\csname p@#1\endcsname##1}
  \DeclareRobustCommand\Ref[1]{\protected@edef\@tempa{\ref{#1}}%
     \expandafter\MakeUppercase\@tempa
  }
  \def\claimproofname{{\noindent \textsf{Proof of Claim:}}}
  \newcommand{\numberlike}[2]{%
     \expandafter\def\csname c@#1\endcsname{%
         \expandafter\csname c@#2\endcsname}%
  }
 \def\DefaultNumberTheoremWithin{section}
  \theoremstyle{plain}
  \newtheorem{lem}{Lemma}
     \numberwithin{lem}{\DefaultNumberTheoremWithin}
     \numberwithin{Claim}{\DefaultNumberTheoremWithin}
  \newtheorem{thm}{Theorem}
     \numberwithin{thm}{\DefaultNumberTheoremWithin}
  \newtheorem{cor}{Corollary}
     \numberwithin{cor}{\DefaultNumberTheoremWithin}
  \newtheorem{prop}{Proposition}
     \numberwithin{prop}{\DefaultNumberTheoremWithin}
  \newtheorem{Conjecture}{Conjecture}
     \numberwithin{Conjecture}{\DefaultNumberTheoremWithin}
  \theoremstyle{definition}
  \newtheorem{Definition}{Definition}
     \numberwithin{Definition}{\DefaultNumberTheoremWithin}
  \theoremstyle{definition}
     \numberwithin{Question}{\DefaultNumberTheoremWithin}
  \theoremstyle{definition}
     \numberwithin{Problem}{\DefaultNumberTheoremWithin}
  \theoremstyle{remark}
     \numberwithin{Remark}{\DefaultNumberTheoremWithin}
  \newtheorem{Example}{Example}
     \numberwithin{Example}{\DefaultNumberTheoremWithin}
     \numberwithin{Case}{lem}
     \numberwithin{Step}{lem}
  \def\eqref{\ref}
\def\hpic #1 #2 {\mbox{$\begin{array}[c]{l} \epsfig{file=#1,height=#2} \end{array}$}}
\def\vpic #1 #2 {\mbox{$\begin{array}[c]{l} \epsfig{file=#1,width=#2} \end{array}$}}
\def\ignore #1 {}
\def\Spec{{\mathrm{Spec}}}
\def\fa{{\mathfrak{a}}}
\def\fp{{\mathfrak{p}}}
\def\fb{\mathfrak{b}}
\def\ca{\mathcalboondox{a}}
\def\cb{\mathcalboondox{b}}
\def\ima{\mathrm{im}}
\def\Cov{\mathrm{Cov}}
\def\link{\mathrm{link}}
\def\gposet{{\mathcal{S}}}
\def\laplace{\mathcal{L}}
\def\Hom{\mathrm{H}}
\def\RR{\mbox{{$\mathbb{R}$}}}
\def\QQ{\mbox{{$\mathbb{Q}$}}}
\def\TT{\mbox{${\mathbb T}$}}
\def\a{\alpha}
\def\Pr{\mathrm{Pr}}
\def\pr{\mathrm{pr}}
\newcommand{\1}{{\mathbf 1}}
\renewcommand{\t}{\tau}
\def \aaron #1 {\marginpar{#1 -AA}}
\begin{document}

\author[E. Babson]{Eric Babson}
\address{1094 Cragmont Avenue, Berkeley, CA 94708, USA}
\email{onemailuser@proton.me}
\author[V. Welker]{Volkmar Welker}
\address{Philipps-Universit\"at Marburg \\
Fachbereich Mathematik und Informatik \\
35032 Marburg \\ Germany}
\email{welker@mathematik.uni-marburg.de}
\title[Versions of the Garland method]{Homological algebra and poset versions of the Garland method}
\thanks{This material is based upon work supported by the National Science Foundation under Grant No. DMS-1440140 while the second author was in 
residence at the
Mathematical Sciences Research Institute in Berkeley, California, USA}

\begin{abstract}
  Garland introduced a vanishing criterion for characteristic zero cohomology groups of locally finite and locally connected simplicial complexes based on the spectral gaps of the graph Laplacians of face links which has turned out to be effective in a wide range of examples.
  In this work we provide a homological algebra version of this method and define a class of posets which provide examples including Garland's original vanishing theorem in the simplicial setting.  The cubical case is analyzed in some detail where it provides a theorem about the structure rather than vanishing of cohomology.  Finally a random cubical setting is contrasted with a familiar simplicial one.  
\end{abstract}

\maketitle

\section{Introduction}

In \cite{Gar} Garland introduced a local to global method to establish characteristic zero cohomological vanishing for locally finite locally connected 
simplicial complexes from sufficiently large spectral gaps for the Laplacians of link graphs.   This has turned out to be effective in a wide range of examples in fields including geometric group theory (see e.g., \cite{BS,Z,Op}) and geometric combinatorics (see e.g., \cite{GW,HKP}).

In this note we generalize this approach to an exactness criterion for pairs of $3$-term complexes of real Hilbert-spaces in \ref{thm:garland}.  
In Section \ref{sec:6} we expand on examples of simplicial and cubical complexes where the global property becomes a geometric restriction on cohomology 
classes and the local one involves both (in the cubical case) 
geometric link graphs and transversal graphs. We then analyze models for
random simplicial and cubical complexes which turn out to have different 
cohomologcal behavior. 

A related albeit different abstract approach to Garland's method \cite{L} appeared 
after this paper.
Garland posets also appear under a different name in the theory of high dimensional expanders (see e.g., \cite{KT}).

Garland structures and Garland posets are introduced in \ref{sec:2} and 
\ref{sec:3} respectively.  These give a homological algebra setting for the method and a collection of combinatorial examples (\ref{prop:garlandstructure}) including locally finite simplicial (\ref{cor:simplicialgarland}) and cubical (\ref{cor:cubicalgarland}) complexes which are expanded on in \ref{sec:4}.   Details of proofs are in \ref{sec:5}. In and \ref{sec:6} we consider several
classes of examples. Moment angle complexes serve as examples for the cubical case. For the cubical and the simplicial case we define models of
random complexes and study them.

\section{Garland Structures}
\label{sec:2}

Write $\fb^*$ for the adjoint of a (bounded linear) map $\fb$ of (real) Hilbert spaces and 
$\fp_X$ for the 
orthogonal projection to a (closed) subspace $X$.  
We call a pair $(A,\fa)$ a $3$-term complex if $$A: A^{-1}\overset{\fa_0}\rightarrow A^0\overset{\fa_1}\rightarrow A^1$$ with $\fa_0$ and $\fa_1$ maps of Hilbert spaces such that $\fa_1\fa_0 = 0$. 
The map $\laplace_A^+=\fa_1^*\fa_1$ is the positive
Laplacian of $(A,\fa)$,
$\Hom^0(A)=\frac{\ker(\fa_1)}{\ima(\fa_0)}\cong \ker(\fa^*_0)\cap \ker(\fa_1)$ is its homology and the complex is called {\sf exact} if this homology vanishes.

\begin{Definition}
  A {\sf Garland structure} $G=(A,B,\fa)$ is a pair of $3$-term complexes $(A,\fa)$ and $(B,\fb)$ with $A$ exact, $B^i\subseteq A^i$, $B^{-1}=A^{-1}$ and 
	$\fb_i=\fp_{B^i}\fa_i$.  
\end{Definition}
Call $A$ and $B$ the local and global complexes of $G$ respectively.  

If $A : A^{-1} \xrightarrow{\fa_0} A^0 \xrightarrow{\fa_1} A^1$ is a three term complex set 
$$\alpha_A=\inf_{\genfrac{}{}{0pt}{}{\phi \in \ker(\fa^*_0)}{\|\phi\|=1}}\|\fa_1(\phi)\|^2.$$
Note that if $A$ is exact and $A^{-1}$ is nonzero then $\alpha_A$ is the spectral gap of the local positive Laplacian $\laplace_A^+$ which has spectrum contained in $\{0\}\cup[\alpha_A,\infty)$ and containing $0$ and $\alpha_A$. 

If $G = (A,B,\fa)$ is a Garland structure set
$$\alpha_G = \alpha_A \,\,\,\hbox{ and }\,\,\,
\beta_G=\sup_{\genfrac{}{}{0pt}{}{\phi \in \ker(\fb_1)}{\|\phi\|=1}}\|\fa_1(\phi)\|^2.$$  
As usual the infimum and supremum over the empty set are $\pm\infty$.  

\begin{thm} \label{thm:garland} 
  If a Garland structure has $\beta_G < \alpha_G$ then $B$ is exact.  
\end{thm}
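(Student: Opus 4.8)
The plan is to prove exactness of $B$ by showing that its harmonic space $\ker(\fb_1)\cap\ker(\fb_0^*)\cong\Hom^0(B)$ is trivial. The only preliminary I need is an intrinsic description of $\fb_0^*$. Since $\fb_0=\fp_{B^0}\fa_0$, since $B^{-1}=A^{-1}$, and since the adjoint of the orthogonal projection onto $B^0\subseteq A^0$ restricts to the inclusion $\iota_{B^0}\colon B^0\hookrightarrow A^0$, one gets $\fb_0^*=\fa_0^*\iota_{B^0}$: for $\phi\in B^0$ the vector $\fb_0^*\phi$ is just $\fa_0^*\phi$ computed in the complex $A$, where $B^{-1}=A^{-1}$ is exactly what makes the two inner products on the target agree. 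Consequently $\ker(\fb_0^*)=B^0\cap\ker(\fa_0^*)$, so any nonzero $\phi\in\ker(\fb_0^*)$, rescaled to unit norm, is admissible in the infimum defining $\alpha_A$.

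With this in hand the theorem follows by a one-line comparison of the two variational quantities. Suppose $B$ is not exact and pick $\phi\in\ker(\fb_1)\cap\ker(\fb_0^*)$ with $\|\phi\|=1$. On one side, $\phi\in\ker(\fa_0^*)$ forces $\|\fa_1(\phi)\|^2\ge\alpha_A=\alpha_G$. On the other side, $\phi\in\ker(\fb_1)$ with $\|\phi\|=1$ is admissible in the supremum defining $\beta_G$, so $\|\fa_1(\phi)\|^2\le\beta_G$. Hence $\alpha_G\le\beta_G$, contradicting $\beta_G<\alpha_G$. Therefore $\ker(\fb_1)\cap\ker(\fb_0^*)=\{0\}$; since $\fb_1\fb_0=0$ gives $\overline{\ima(\fb_0)}\subseteq\ker(\fb_1)$ and the orthogonal complement of $\overline{\ima(\fb_0)}$ inside $\ker(\fb_1)$ is precisely $\ker(\fb_1)\cap\ker(\fb_0^*)$, we conclude $\ker(\fb_1)=\overline{\ima(\fb_0)}$, i.e.\ $B$ is exact.

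I do not expect a genuine obstacle here: the substance of the Garland method is packed into the definitions, and the theorem reduces to matching the domain of the ``$\alpha$'' infimum with that of the ``$\beta$'' supremum, which is immediate once $\ker(\fb_0^*)$ has been identified with $B^0\cap\ker(\fa_0^*)$. The only points demanding a little care are that identification — where the defining hypotheses $B^{-1}=A^{-1}$ and $\fb_i=\fp_{B^i}\fa_i$ of a Garland structure are used — and the routine analytic bookkeeping with $\overline{\ima(\fb_0)}$ needed to pass between the ``$\ker/\ima$'' and ``$\ker\cap\ker$'' descriptions of $\Hom^0(B)$. It is worth noting in passing that exactness of $A$ is never actually invoked; it serves only to justify reading $\alpha_G$ as the spectral gap of $\laplace_A^+$. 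The real work lies in the later sections, in bounding $\alpha_G$ and $\beta_G$ from geometric link and transversal data.
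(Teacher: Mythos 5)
Your proof is correct and is essentially the paper's own argument: both take a unit vector in the harmonic space $\ker(\fb_1)\cap\ker(\fb_0^*)$, use $B^{-1}=A^{-1}$ (via $\fb_0^*=\fa_0^*\iota_{B^0}$) to see that it lies in $\ker(\fa_0^*)$, and then sandwich $\|\fa_1(\phi)\|^2$ between $\alpha_G$ and $\beta_G$ to reach a contradiction. Your added observations (the explicit identification of $\fb_0^*$, and the remark that exactness of $A$ is not actually used) are accurate but only flesh out what the paper leaves implicit.
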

\begin{proof}
  If $\Hom^0(B)\neq \{0\}$ there is $\phi\in \ker(\fb^*_0)\cap \ker(\fb_1)$ 
	with 
  $\|\phi\|=1$. Since $B^{-1}=A^{-1}$ 
  we have $\ker(\fb^*_0)\subseteq \ker(\fa^*_0)$. It then follows that  
  $\alpha_G\leq\|\fa_1(\phi)\|^2\leq\beta_G < \alpha_G$. 
\end{proof}

Garland structures form a category in which a map is a chain map between local complexes which induces one between the global subcomplexes.

In this category, if the local complex $(A,\fa)$ of a Garland structure $G$ 
has a direct summand $(A_+,\fa_+)$ then $G_+=(A_+,A_+\cap B, \fa_+)$ is a 
substructure of $G$.  

Note that if 
$$A^{-1}_-=\{0\}, ~~~~ A^0_-=\big\langle\,\{\,\phi \in \ker(\fa^*_0)\,:\,\|\phi\|=1,\,\|\fa_1(\phi)\|^2=\alpha_A\,\}\,\big\rangle$$ 
$$ \text{ and } A^1_-=\fa_1 A^0_-$$ then there is $A_+$ with $A=A_-\oplus A_+$.  Repeating yields:

\begin{cor}
	\label{cor:spectral}
The dimension of $\Hom^0(B)$ is at most that of the spectral projection of 
$\laplace_A^+$ to $[\alpha_G,\beta_G]$.
\end{cor}

Note that if a group acts by automorphisms on a Garland structure $G$ then it 
induces decompositions into direct sums of Garland structures and the theorem 
can be applied to each summand separately.  
An example where this approach may find applications is
the complex studied in \cite[Theorem 1.4]{PW}. There the authors consider 
the $n$-dimensional vector
space over the field with $q^2$ elements equipped with a (non-degenerate) 
unitary form. 
They consider the simplicial complex of collections of mutually orthogonal
non-degenerate $1$-dimensional subspaces. If $q$ is large compared to $n$
then they use the usual Garland method the show that the homology of 
the complex is concentrated in dimension $n-2$. For $q$ small compared to 
$n$ there are counterexamples and the homological structure is unknown. 
The group $GU_n(q)$ acts on this complex. Therefore, despite the fact that 
many technicalities will be involved, the separate
analysis of the corresponding ''irreducible'' Garland structures provides
a plausible approach.


\section{Garland Posets}
\label{sec:3}

In this section we define Garland posets to which we associate Garland structures.
In \ref{sec:4} these are shown to include posets associated to simplicial and cubical complexes for which the above theorem gives cohomological restrictions.
Proofs are sketched here and expanded in \ref{sec:proofs}.

For a poset $P$ and a subposet $Q$ we write $Q_{> x}$ ($Q_{< x}$) for the 
subposet of elements of $Q$ greater (less) than $x\in P$.  
Analogously defined are $Q_{\leq p}$ and $Q_{\geq p}$. 
An order relation $x < y$ in $P$ is called a cover relation if there is no
$z \in P$ with $x < z < y$. By $\Cov(P) \subseteq P \times P$ we denote the
set of all $(x,y)$ such that $x < y$ is a cover relation.


\begin{Definition}
  A {\sf Garland poset} $\gposet =(S,\leq,w,n,\rho)$ is a poset $(S,\leq)$ 
  together with a (cover preserving)  poset map $\rho : S \rightarrow \{ -1 < 0 < 1\}$,
  an orientation 
  $w:\Cov(S) \rightarrow \{-1,+1\}$, $(x,y) \mapsto w_{xy}$, and 
  numbers $n_0$, $n_1$ and $n_{010}$ so that for $S^i = \rho^{-1}(i)$
  we have:
  \begin{itemize}
    \item[(P1)] if $b\in S^0$ then $|S_{<b}|=n_0$,
    \item[(P2)] if $c\in S^1$ then $|S_{<c}^{-1}|=n_1$,
    \item[(P3)] if $S^0\ni b<c>b'\in S^0-\{b\}$ then $|S_{<b}\cap S_{<b'}|=n_{010}$,
    \item[(P4)] if $a\in S^{-1}$ then $S_{>a}$ is finite and connected,
    \item[(P5)] if $S^{-1}\ni a<c\in S^1$ then $S_{>a}\cap S_{<c}=\{b,b'\}$ with 
      \begin{align} \label{eq:w} 
        w_{ab}w_{bc}& =-w_{ab'}w_{b'c}.
      \end{align}
  \end{itemize}
\end{Definition}


In our examples of Garland posets we usually 
define the mutually disjoint sets $S^i$ directly, rendering $\rho$ superfluous.
Given a Garland poset $\gposet$ and a choice of $i$ consider the Hilbert space 
with orthonormal basis $$\{\,z_{axc}\,|\,S^{-1}\ni a\leq x\leq c\in S^1, 
x\in S^i\}.$$ Note that for $i=-1$ we have $z_{aac}$ as basis elements 
and for $i =1$ we have $z_{acc}$ as basis elements.
By (P4) the closed subspaces $A^i$ and $B^i$ generated 
respectively by 
$$\left\{z_{ax}=\sum_{c\in S_{\geq x}^1}z_{axc}\,|\,S^{-1}\ni a\leq x\in S^i\,\right\}\,\,\text{and}\,\,\left\{\,z_{x}=\sum_{a\in S_{\leq x}^{-1}}z_{ax}\,|\,x\in S^i\right\}$$ 
are well defined. 
The definitions immediately imply that $A^{-1} = B^{-1}$.
Note that the given generating sets are orthogonal but not normalized bases of the respective spaces.

\begin{lem}
	\label{lem:induced}
  The maps $$\fa_i\big(\,z_{ax}\,\big)=\sum_{y\in S_{>x}^{i}}w_{xy}z_{ay}$$ 
  extend to complexes 
	$$A_{\gposet} ~:~A^{-1} \xrightarrow{\fa_0} A^0 \xrightarrow{\fa_1} A^1$$
  and
	$$B_{\gposet} ~:~B^{-1} \xrightarrow{\fb_0 = \fp_{B^0}\fa_0} B^0 \xrightarrow{\fb_1 = \fp_{B^1} \fa_1} B^1$$
  of bounded linear maps.
\end{lem}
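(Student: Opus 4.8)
The plan is to verify, in order: that $\fa_0$ and $\fa_1$ extend to bounded operators; that $\fa_1\fa_0=0$; and that the induced maps $\fb_i=\fp_{B^i}\fa_i$ compose to zero as well (boundedness of the $\fb_i$ is then automatic, since orthogonal projections are contractions). Because $\rho$ is a cover preserving poset map into $\{-1<0<1\}$, each $S^i$ is an antichain, every element below a $b\in S^0$ lies in $S^{-1}$, and every element below a $c\in S^1$ lies in $S^{-1}\cup S^0$; consequently $S_{<a}=\emptyset$ and $z_a=z_{aa}$ for $a\in S^{-1}$ (so $B^{-1}=A^{-1}$); $z_{ax}$ is the single basis vector $z_{axx}$ when $x\in S^1$, so $B^1$ has orthogonal basis $\{z_c\}$ with $\|z_c\|^2=|S^{-1}_{<c}|=n_1$ (by (P2)); $\|z_{ab}\|^2=|S^1_{\ge b}|=|S^1_{>b}|$ and $\|z_b\|^2=n_0|S^1_{>b}|$ (using (P1), $|S_{<b}|=|S^{-1}_{<b}|=n_0$); and for $a\in S^{-1}$, $c\in S^1$ with $a<c$ the two elements of $S_{>a}\cap S_{<c}$ provided by (P5) both lie in $S^0$. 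By (P4) each $S_{>a}$ is finite, so every sum above is finite, and I will freely use that $\{z_{ax}\}$, $\{z_x\}$ are orthogonal bases of $A^i$, $B^i$ with $z_{ax}\perp z_{a'x'}$ unless $(a,x)=(a',x')$, $z_x\perp z_{x'}$ for $x\ne x'$, and $\langle z_{ax},z_x\rangle=\|z_{ax}\|^2$.

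For boundedness: since $\fa_0(z_{aa})=\sum_{b\in S^0_{>a}}w_{ab}z_{ab}$ involves only basis vectors with first index $a$, the map $\fa_0$ sends the orthonormal basis $\{z_{aa}/\|z_{aa}\|\}$ of $A^{-1}$ to pairwise orthogonal vectors and so extends boundedly, with $\|\fa_0\|^2=\sup_a\|\fa_0(z_{aa})\|^2/\|z_{aa}\|^2$; rewriting the right side as the average over $c\in S^1_{>a}$ of $|\{b\in S^0:a<b<c\}|$ and using (P5) shows it is at most $2$. For $\fa_1$ the images of distinct basis vectors overlap, but evaluating $\fa_1$ on a finite combination $v=\sum c_{ab}z_{ab}/\|z_{ab}\|$, reading off the coefficient of each output vector $z_{ac}$ as a sum of (again by (P5)) exactly two terms, applying Cauchy--Schwarz and using $\|z_{ab}\|^2=|\{c\in S^1:c>b\}|$, one gets $\|\fa_1v\|^2\le2\|v\|^2$. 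Finally $\fa_1\fa_0(z_{aa})=\sum_{c\in S^1_{>a}}\big(\sum_{a<b<c}w_{ab}w_{bc}\big)z_{ac}$, and for each $c$ the inner sum runs over the two-element set $S_{>a}\cap S_{<c}$ and cancels by \eqref{eq:w}; since the $z_{aa}$ span a dense subspace this gives $\fa_1\fa_0=0$.

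For $B_\gposet$ I would compute $\fb_i$ on the generators $z_{aa}$, $z_b$ via the orthogonal-basis projection formula $\fp_{B^i}(v)=\sum_x\langle v,z_x\rangle z_x/\|z_x\|^2$. Using the norms above one finds $\fp_{B^0}(z_{ab})=z_b/n_0$ and $\fp_{B^1}(z_{ac})=z_c/n_1$, hence $\fb_0(z_{aa})=\tfrac1{n_0}\sum_{b\in S^0_{>a}}w_{ab}z_b$ and $\fb_1(z_b)=\tfrac{n_0}{n_1}\sum_{c\in S^1_{>b}}w_{bc}z_c$; composing, $\fb_1\fb_0(z_{aa})=\tfrac1{n_1}\sum_{c\in S^1}\big(\sum_{a<b<c}w_{ab}w_{bc}\big)z_c$, which again vanishes term by term by \eqref{eq:w}, and density completes the proof.

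The one genuinely nontrivial point (and the main obstacle) is this last computation: since the projection $\fp_{B^0}$ sits between $\fa_1$ and $\fa_0$, $\fb_1\fb_0$ is not literally $\fp_{B^1}(\fa_1\fa_0)$, and one has to check that replacing a generator $z_{ab}$ by its fibre average $z_b/n_0$ is compatible with the two-term cancellation \eqref{eq:w}. Equivalently, and perhaps most cleanly: $\fp_{B^1}\fa_1$ takes the same value on $z_{ab}$ and on $z_b/n_0$ (this is where (P2) enters, through $\|z_c\|^2=n_1$), so it annihilates $(\mathrm{id}-\fp_{B^0})\fa_0(A^{-1})$ and therefore $\fb_1\fb_0=\fp_{B^1}\fa_1\fp_{B^0}\fa_0=\fp_{B^1}\fa_1\fa_0=0$. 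A minor caveat is that, $S$ possibly being infinite, "extends to a bounded operator" genuinely rests on the (near-)orthogonality of images noted above rather than on any finite-dimensional shortcut.
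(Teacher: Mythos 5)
Your proposal is correct and follows essentially the same route as the paper's proof: boundedness via the explicit norm computations on the orthogonal generators, $\fa_1\fa_0=0$ and $\fb_1\fb_0=0$ by the two-term cancellation of (P5), and the projection coefficients $\fp_{B^0}(z_{ab})=z_b/n_0$, $\fp_{B^1}(z_{ac})=z_c/n_1$ obtained from (P1) and (P2), which is exactly the content of the paper's diagrammatic computation. Your Cauchy--Schwarz treatment of the overlapping images of $\fa_1$ and the observation that $\fp_{B^1}\fa_1$ agrees on $z_{ab}$ and $z_b/n_0$ (so $\fp_{B^1}\fa_1\fp_{B^0}\fa_0=\fp_{B^1}\fa_1\fa_0=0$) are only slightly more explicit packagings of the same argument.
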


The proof of this is a simple calculation and can be found in \ref{sec:proofs}.
For $a \in S^{-1}$ we write $A_a$ for the three term subcomplex of $(A,\fa)$ with
$A^i_a$ generated by $\{\,z_{ax}\,|\,x\in S_{\geq a}^i\,\}$.
Note that by (P4) $A_a$ is of finite dimension. 
The following lemma is a direct consequence of our construction and again is proved in \ref{sec:proofs}. 

\begin{lem} \label{lem:asum} 
  If $\gposet$ is a Garland poset then 
  $A_{\gposet}=\overline{\oplus_{a\in S^{-1}}A_a}$ is the closure of the internal 
  orthogonal sum of complexes. 
	Moreover, each $A_a$ is exact and hence $A_{\gposet}$ is exact.
\end{lem}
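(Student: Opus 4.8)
The plan is to decompose $A_{\gposet}$ according to the minimal elements $a \in S^{-1}$, show each piece $A_a$ is an exact finite-dimensional $3$-term complex, and then assemble these facts. First I would verify that the subspaces $A^i_a$ (spanned by $\{z_{ax} : x \in S^i_{\geq a}\}$ for a fixed $a$) are pairwise orthogonal as $a$ ranges over $S^{-1}$: this is immediate from the construction, since distinct $a$'s give disjoint index sets $\{axc\}$ in the ambient Hilbert space with orthonormal basis $\{z_{axc}\}$, and $z_{ax} = \sum_c z_{axc}$ involves only basis vectors with first index $a$. The map $\fa_i$ sends $z_{ax}$ to $\sum_{y \in S^i_{> x}} w_{xy} z_{ay}$, which again only involves the index $a$, so $\fa_i(A^i_a) \subseteq A^{i+1}_a$; hence $(A_a, \fa)$ is genuinely a subcomplex and $A_{\gposet} = \overline{\bigoplus_{a \in S^{-1}} A_a}$ as an (internal, orthogonal) Hilbert-space direct sum. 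Finite-dimensionality of each $A_a$ follows from (P4), since $S_{> a}$ is finite.

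The substantive step is exactness of each $A_a$. Here $\Hom^0(A_a) = \ker(\fa^*_0|_{A^0_a}) \cap \ker(\fa_1|_{A^0_a})$, and I would argue this is zero by a connectivity argument using (P4) and (P5). Take $\phi = \sum_{b \in S^0_{\geq a}} \lambda_b z_{ab} \in \ker(\fa^*_0) \cap \ker(\fa_1)$. The condition $\phi \in \ker(\fa^*_0)$, since $A^{-1}_a$ is one-dimensional spanned by $z_{aa}$ and $\fa_0(z_{aa}) = \sum_{b \in S^0_{> a}} w_{ab} z_{ab}$, forces $\sum_b w_{ab}\lambda_b \langle z_{ab}, z_{ab}\rangle = 0$ — one linear constraint. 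The condition $\phi \in \ker(\fa_1)$ gives, for each $c \in S^1_{\geq a}$, that the coefficient of $z_{ac}$ in $\fa_1(\phi)$ vanishes: by (P5) there are exactly two elements $b, b'$ with $a < b, b' < c$, and the coefficient is $w_{bc}\lambda_b \|z_{ab}\|^2 + w_{b'c}\lambda_{b'}\|z_{ab'}\|^2$ up to the normalization bookkeeping, so $\phi \in \ker(\fa_1)$ says $\lambda_b$ and $\lambda_{b'}$ are proportional with the sign dictated by $w$. Combined with the sign relation \eqref{eq:w}, this means that along any covering chain $b - c - b'$ in the comparability structure the normalized coefficients agree; since $S_{> a}$ is connected by (P4), all the (suitably normalized) $\lambda_b$ are equal, i.e. $\phi$ is a scalar multiple of a single "all-ones" vector. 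That vector is precisely $\fa_0(z_{aa})$ up to scalar, hence lies in $\ima(\fa_0)$ and is killed by the $\ker(\fa^*_0)$ constraint only if it is zero. So $\Hom^0(A_a) = 0$.

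Finally, exactness passes from the summands to the closure: a class in $\Hom^0(A_{\gposet})$ is represented by some $\phi \in \ker(\fa^*_0) \cap \ker(\fa_1)$, and since the $A_a$ are orthogonal and $\fa_0, \fa_1$ respect the decomposition, the orthogonal projection $\phi_a$ of $\phi$ onto $A^0_a$ lies in $\ker(\fa^*_0|_{A_a}) \cap \ker(\fa_1|_{A_a}) = \{0\}$ for every $a$; hence $\phi = 0$. I expect the main obstacle to be the normalization bookkeeping in the middle step: the generating vectors $z_{ax}$ and $z_x$ are orthogonal but not unit vectors, so the precise form of $\fa^*_0$ and the coefficient identities in $\ker(\fa_1)$ involve the norms $\|z_{ax}\|^2 = |S^1_{\geq x}|$ (controlled by (P2)) and similar counts, and one must check the sign relation (P5) interacts correctly with these weights so that the "all coefficients equal after normalization" conclusion is actually forced by connectivity. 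This is exactly the kind of routine-but-delicate calculation the paper defers to \ref{sec:proofs}.
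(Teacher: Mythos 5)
Your proposal is correct and follows essentially the same route as the paper: orthogonal decomposition over $a\in S^{-1}$, exactness of each finite-dimensional $A_a$ via the connectivity of $S_{>a}$ from (P4) and the sign relation (P5), then passage to the closed sum. The only cosmetic difference is that you argue with the harmonic representative in $\ker(\fa_0^*)\cap\ker(\fa_1)$ and show it is a multiple of $\fa_0(z_{aa})$, while the paper runs a minimal-support induction on cycles in $\ker(\fa_1)$ modulo $\ima(\fa_0)$; note also that the coefficient of $z_{ac}$ in $\fa_1(\phi)$ is simply $w_{bc}\lambda_b+w_{b'c}\lambda_{b'}$ with no norm factors, so the "normalization bookkeeping" you worry about only enters through the single $\ker(\fa_0^*)$ constraint.
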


By (P5) for a Garland poset $\gposet$ and $a \in S^ {-1}$ we can consider $S_{> a}$ as a graph, 
called the link graph at $a$,
with vertices $S^0_{> a}$ and edges $S^ 1_{> a}$. The disjoint 
union $\Gamma_{\gposet}$ of the graphs $S_{> a}$ for $a \in S^{-1}$ is called the link graph of $\gposet$. 
The positive Laplacian $\laplace^+_{A_a}$ of the complex $A_a$ is then the left normalized graph Laplacian 
(in the sense of \cite{J})
of the link graph at $a$. The normalization is induced by the fact that
the basis elements $z_{ab}$ of $A_a^{0}$ have norm the degree of $b$ in the
link graph at $a$. 
Note that if $\alpha_{A_a}\not=0$ the spectrum of $\laplace_a^+$ contains 
$0$ with multiplicity one and $\alpha_{A_a}$ and is contained in $\{0\}\cup [\alpha_{A_a},2]$. In particular, $\alpha_{A_a}$ is the spectral gap of
$\laplace_a^ +$. 
In the formulation of the following proposition we call a Garland structure $H = (A_H,B_H,\fa^H)$ a substructure of the
Garland structure $G = (A,B,\fa)$ if $A_H$ is a subcomplex of $A$, $B_H$ a subcomplex of $B$ and $\fa^H$ is the restriction of $\fa$. 

\begin{prop} \label{prop:garlandstructure} 
  If $\gposet$ is a Garland poset then 
  $G = (A_{\gposet},B_{\gposet},\fa)$ is a Garland structure with:
  \begin{itemize}
	  \item[(i)] $\alpha_{G}=\displaystyle{\inf_{a\in S^{-1}} \alpha_{A_a}}$ and
	  \item[(ii)] $\beta_{H} \leq \frac{n_0-n_{010}}{n_0}$
            if $H$ is any Garland substructure of $G$ with $\ker \fb_1 \cap B_H^0$ nonzero.
		  
      \end{itemize}
	In particular, $B_{\gposet}$ is acyclic if $\frac{n_0-n_{010}}{n_0}<\alpha_{A_a}$ for every $a\in S^{-1}$.   
\end{prop}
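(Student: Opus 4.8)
The plan is to first record that $G=(A_\gposet,B_\gposet,\fa)$ is indeed a Garland structure and then establish (i), (ii) and the final clause in turn. That $(A_\gposet,\fa)$ and $(B_\gposet,\fb)$ are $3$-term complexes is \ref{lem:induced}; $A_\gposet$ is exact by \ref{lem:asum}; the inclusions $B^i\subseteq A^i$ and the relations $\fb_i=\fp_{B^i}\fa_i$ are immediate from the construction; and $B^{-1}=A^{-1}$ because $S^{-1}$ is an antichain (as $\rho$ is cover preserving and lands in a three-element chain), so both spaces are generated by the same orthogonal family $\{z_{aa}\}_{a\in S^{-1}}$. For (i): by \ref{lem:asum}, $A_\gposet=\overline{\bigoplus_{a\in S^{-1}}A_a}$ is an orthogonal sum of subcomplexes, so $\fa_1$ and $\fa_0^*$ respect the decomposition and $\ker\fa_0^*=\overline{\bigoplus_a\ker(\fa_0^{A_a})^*}$. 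Writing a unit $\phi\in\ker\fa_0^*$ as $\phi=\sum_a\phi_a$ with $\phi_a\in\ker(\fa_0^{A_a})^*$ gives $\|\fa_1\phi\|^2=\sum_a\|\fa_1\phi_a\|^2\ge\sum_a\alpha_{A_a}\|\phi_a\|^2\ge(\inf_a\alpha_{A_a})\,\|\phi\|^2$, and the reverse bound follows by testing against approximate minimisers inside the individual $A_a$; hence $\alpha_G=\inf_a\alpha_{A_a}$.

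The substance is (ii), which I would prove by a direct computation in the orthogonal bases $\{z_b\}_{b\in S^0}$ of $B^0$ and $\{z_c\}_{c\in S^1}$ of $B^1$. The relevant squared norms are $\|z_b\|^2=n_0\,d_b$ with $d_b:=|S^1_{>b}|$ (from $\|z_{ab}\|^2=d_b$ and $|S^{-1}_{<b}|=|S_{<b}|=n_0$ by (P1)) and $\|z_c\|^2=n_1$ (by (P2)), so for $\phi=\sum_b\lambda_b z_b$ we have $\|\phi\|^2=n_0\sum_b d_b\lambda_b^2$. Collecting $\fa_1\phi$ along the pairs $(a,c)$ with $a\in S^{-1}$, $c\in S^1$, $a<c$ yields
$$\fa_1\phi=\sum_{a<c}\nu_{ac}\,z_{acc},\qquad\nu_{ac}:=\sum_{b\in S_{>a}\cap S_{<c}}\lambda_b\,w_{bc},$$
where by (P5) the set $S_{>a}\cap S_{<c}$ has exactly two elements, both in $S^0$. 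Expanding $\|\fa_1\phi\|^2=\sum_{a<c}\nu_{ac}^2$ and summing over $a$ for fixed $c$, each $b\in S^0_{<c}$ contributes to $\nu_{ac}$ for exactly $|S^{-1}_{<b}|=n_0$ values of $a$ (by (P1)) and each unordered pair $\{b,b'\}\subseteq S^0_{<c}$ for exactly $|S^{-1}_{<b}\cap S^{-1}_{<b'}|=n_{010}$ values of $a$ (by (P3)); using $w_{bc}^2=1$ and setting $\mu_c:=\sum_{b\in S^0_{<c}}\lambda_b w_{bc}$ this collapses to
$$\sum_{a<c}\nu_{ac}^2=(n_0-n_{010})\sum_{b\in S^0_{<c}}\lambda_b^2+n_{010}\,\mu_c^2.$$
Also $\langle\fa_1\phi,z_c\rangle=\sum_{a<c}\nu_{ac}=n_0\mu_c$ (again by (P1)), whence $\fb_1\phi=\fp_{B^1}\fa_1\phi=\tfrac{n_0}{n_1}\sum_c\mu_c z_c$ and $\ker\fb_1\cap B^0=\{\phi:\mu_c=0\text{ for all }c\}$. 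Summing the previous identity over $c\in S^1$ (each $\lambda_b^2$ counted with multiplicity $d_b$) gives, for every $\phi\in B^0$,
$$\|\fa_1\phi\|^2=\frac{n_0-n_{010}}{n_0}\,\|\phi\|^2+n_{010}\sum_{c\in S^1}\mu_c^2,$$
so $\|\fa_1\phi\|^2=\tfrac{n_0-n_{010}}{n_0}\|\phi\|^2$ exactly on $\ker\fb_1\cap B^0$, and $\ge\tfrac{n_0-n_{010}}{n_0}\|\phi\|^2$ on all of $B^0$. For a Garland substructure $H$ one has $\ker\fb_1\cap B_H^0\subseteq\ker\fb_1^H$ (since $\fa_1\phi=\fa_1^H\phi\in A_H^1$ and $\fa_1\phi\perp B^1\supseteq B_H^1$), so $\beta_H\ge\tfrac{n_0-n_{010}}{n_0}$ whenever $\ker\fb_1\cap B_H^0\ne\{0\}$; for the reverse bound one checks that $\fb_1^H$ agrees with $\fb_1$ on $B_H^0$ — equivalently $B_H^1=B^1\cap A_H^1$ and $\fp_{B^1}$ preserves $A_H^1$ — which holds for the (orthogonally split) substructures occurring here, e.g.\ isotypic summands, cf.\ the remark after \ref{thm:garland}. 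Hence $\beta_H=\tfrac{n_0-n_{010}}{n_0}$, and in particular $\beta_G=\tfrac{n_0-n_{010}}{n_0}$.

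For the final clause, suppose $\Hom^0(B_\gposet)\ne\{0\}$ and pick a unit $\phi\in\ker\fb_0^*\cap\ker\fb_1$; since $B^{-1}=A^{-1}$ we have $\phi\in\ker\fa_0^*$, so $\phi$ is a nonzero element of $\ker\fb_1\cap B^0$ and (ii) gives $\beta_G=\tfrac{n_0-n_{010}}{n_0}$. Decomposing $\phi=\sum_a\phi_a$ as in (i) and using the hypothesis $\tfrac{n_0-n_{010}}{n_0}<\alpha_{A_a}$ for every $a$,
$$\tfrac{n_0-n_{010}}{n_0}=\beta_G\ge\|\fa_1\phi\|^2=\sum_a\|\fa_1\phi_a\|^2\ge\sum_a\alpha_{A_a}\|\phi_a\|^2>\tfrac{n_0-n_{010}}{n_0}\sum_a\|\phi_a\|^2=\tfrac{n_0-n_{010}}{n_0},$$
the strict inequality holding because some $\phi_a\ne0$; this contradiction shows $B_\gposet$ is acyclic. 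The main obstacle is the bookkeeping in (ii): organising $\|\fa_1\phi\|^2$ as a sum over the pairs $(a,c)$ and feeding (P1), (P3) and (P5) into it at the right points to collapse the double sum. The essential structural inputs are that the generators $z_b$ of $B^0$ have unnormalised squared norm $n_0 d_b$ and that, by (P5), each pair $(a,c)$ has exactly two middle elements, so the link at $a$ really is a graph; once the identity $\|\fa_1\phi\|^2=\tfrac{n_0-n_{010}}{n_0}\|\phi\|^2+n_{010}\sum_c\mu_c^2$ is in hand, part (i), the substructure statement and the acyclicity all follow with little further effort.
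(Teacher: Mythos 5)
Your argument is correct and is essentially the paper's: part (i) uses the same orthogonal decomposition coming from \ref{lem:asum}, and your key identity $\|\fa_1\phi\|^2=\tfrac{n_0-n_{010}}{n_0}\|\phi\|^2+n_{010}\sum_{c}\mu_c^2$ is exactly the paper's evaluation of $\|\fa_1\phi\|^2-\kappa\|\fb_1\phi\|^2$ with $\kappa=\tfrac{n_{010}n_1}{n_0^2}$ (note $\kappa\|\fb_1\phi\|^2=n_{010}\sum_c\mu_c^2$), carried out in explicit coordinates using (P1), (P2), (P3), (P5) instead of the diagrammatic calculus, with the acyclicity clause then following as in \ref{thm:garland}. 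The caveat you flag for general substructures --- that the upper bound $\beta_H\le\tfrac{n_0-n_{010}}{n_0}$ needs $\fp_{B^1}\fa_1(B_H^0)\subseteq B_H^1$, i.e.\ that $\fb_1^H$ agrees with $\fb_1$ on $B_H^0$ --- is likewise implicit in the paper's proof (its evaluation of \eqref{eq:beta} is done with the full $\fb_1$), and it is vacuous in the case $H=G$ that feeds into the final acyclicity statement and the corollaries.
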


\begin{proof}[Sketch of proof (For a full proof see \ref{sec:5}):]
	The fact that $A_{\gposet}$ and $B_{\gposet}$ are complexes is proved in 
  \ref{lem:induced}.
	By \ref{lem:asum} $A_{\gposet}$ is exact. 
  Assertion (i) again follows from \ref{lem:asum}. 

  For (ii) note that if $\ker \fb_1^H \neq 0$ then 
	\begin{align}
		\label{eq:beta}
	\beta_{H} & \leq \sup_{0 \neq \phi\in B_H^0}\frac{\|\fa_1^H(\phi)\|^2-\kappa\|\fb_1^H(\phi)\|^2}{\|\phi\|^2} \end{align}
	for any 
	$\kappa >0$ since $\beta_{H}$ is a supremum over the kernel of $\fb_1^H$.   
	Choose $\kappa=\frac{n_{010}n_1}{n_0^2}$ and show that for every choice of $\phi$ the expression after the supremum is $\frac{n_0-n_{010}}{n_0}$. 

  The final assertion is a direct consequence of (i), (ii), \ref{thm:garland} and the previous paragraph.
\end{proof}

\section{Application to Simplicial and Cubical Complexes}
\label{sec:4}

Garland's original work treated the case of locally finite simplicial 
complexes with connected links and \ref{thm:garland} recovers his
vanishing condition in this case.
Our approach, among other examples, also treats 
locally finite cubical complexes and disconnected links in which cases
\ref{thm:garland} yields geometric and rank restrictions on generators
of cohomology rather than vanishing.  

\subsection{Simplicial}
\label{sec:simplicial}

Let $P$ be a locally finite simplicial poset.  This means that every lower interval is Boolean and every upper interval is finite. 
Equivalently, $P$ is the face poset of a locally finite $\Delta$-complex 
$\Delta_P$ (see \cite{H}).  
We write $P^k$ for the elements of $P$ covering $k+1$ elements and $P^K=\cup_{k\in K}P^k$ for the rank selected subposet of $P$.  

For each dimension $k\geq 1$ we construct a Garland poset $\gposet_kP = 
(S,\leq,w,n,\rho)$ 
with $S^{\{0,1\}}=P^{\{k,k+1\}}$ and $S^{-1}=\{\left(a,[b]\right)|P^{k-1}\ni a<b\in P^k\}$
with $[b]\subseteq P^k$ the vertex set of a connected component of
$P_{>a}^{\{k+1,k\}}$ containing $b$. In particular, 
$S^{-1}$ is in obvious bijection to $P^{k-1}$ in the 
case of connected links. We set 
$(a,[b])<b$ so $\pi:S\rightarrow P$ with $\pi(a,[b])=a$ and 
$\pi(c)=c$ for $c \in S^{\{0,1\}}$ is order preserving.  

If $n_0=k+1$, $n_1=\binom{k+2}{2}$ and $n_{010}=1$ 
and $\rho$ the obvious choice then $S$ clearly satisfies 
(P1), (P2), (P3) and (P4) in the definition of a Garland poset.

Fix a total ordering of $P^0$ which induces an orientation on $P^{\{k-1,k,k+1\}}$ with 
$u_{ab}=(-1)^r$ if $P^0\cap P_{< b}=\{v_0,v_1,\ldots v_s\} \ni v_r$ is listed in the fixed order and 
$v_r\not\leq a$.  
Use $\pi$ to pull the orientation back to $S$ with $w_{xy}=u_{\pi(x)\pi(y)}$.  Now (P5) clearly holds and $\gposet_kP=(S,\leq,w,n,\rho)$ is a Garland poset.  


Write $C^{k-1,k,k+1}$ for the degree $k-1$, $k$, and $k+1$ three term 
simplicial cochain complex for $\Delta_P$ with real coefficients determined 
by the given local ordering and note that 
$\pi^*:C^{k-1,k,k+1} \rightarrow B_{\gposet_kP}$
is an injective cochain map with cokernel $B^{-1}_{\gposet_kP} / C^{k-1}$ supported only in one degree and isomorphic to
$\bigoplus_{p\in P^{k-1}}\widetilde{\Hom}^0(\,\Delta_{P_{\geq p}}\,)$. 
For $p\in P^{k-1}$ write $\lambda_p: \widetilde{\Hom}^0(\,\Delta_{P_{\geq p}}\,) \rightarrow {\Hom}^k(\Delta_P)$  
for the connecting homomorphism in the induced long exact sequence 
$$\bigoplus_{p \in P^ {k-1}} \widetilde{\Hom}^0(\Delta_{P_{\geq p}}) \rightarrow \widetilde{\Hom}^k(\Delta_P) \rightarrow {\Hom}^0(B_{\gposet_kP}) \rightarrow 0.$$

Write $L_{k,P}$ for the span of the images of the $\lambda_p$ for all $p$.   
Thus by \ref{prop:garlandstructure} and \ref{thm:garland}:

\begin{cor} 
  \label{cor:simplicialgarland}
  If $P$ is a locally finite simplicial poset and the spectral gap 
  of every connected component of the link graph $\Gamma_{\gposet_kP}$ is 
	greater than $\frac{k}{k+1}$ then $\widetilde{\Hom}^k(\Delta_P)=L_{k,P}$.  
\end{cor}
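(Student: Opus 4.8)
The plan is to obtain the statement directly from \ref{prop:garlandstructure}, applied to the Garland poset $\gposet_kP$ constructed in \ref{sec:simplicial}, combined with the long exact sequence displayed just above the corollary. First I would record the numerical data of $\gposet_kP$: one takes $n_0=k+1$, $n_1=\binom{k+2}{2}$ and $n_{010}=1$, so the threshold appearing in \ref{prop:garlandstructure} is $\frac{n_0-n_{010}}{n_0}=\frac{k}{k+1}$. Next I would translate the abstract gap $\alpha_{A_a}$ into a classical one: for $a\in S^{-1}$ the complex $A_a$ is finite-dimensional by (P4), it is exact by \ref{lem:asum}, and, as noted before \ref{prop:garlandstructure}, its positive Laplacian $\laplace^+_{A_a}$ is the left-normalized graph Laplacian of the link graph $S_{>a}$, which by construction of $\gposet_kP$ is precisely a connected component of the geometric link graph of a $(k-1)$-face of $\Delta_P$; hence $\alpha_{A_a}$ is the spectral gap of that component. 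Consequently the hypothesis that every component of $\Gamma_{\gposet_kP}$ has spectral gap exceeding $\frac{k}{k+1}$ is exactly the inequality $\frac{n_0-n_{010}}{n_0}<\alpha_{A_a}$ for all $a\in S^{-1}$, and the final clause of \ref{prop:garlandstructure} --- which is \ref{thm:garland} applied through parts (i) and (ii) --- gives that $B_{\gposet_kP}$ is exact, i.e. $\widetilde{\Hom}^0(B_{\gposet_kP})=0$.

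Second, I would feed this vanishing into the exact sequence
$$\bigoplus_{p\in P^{k-1}}\widetilde{\Hom}^0(\Delta_{P_{\geq p}})\xrightarrow{\ \bigoplus_p\lambda_p\ }\widetilde{\Hom}^k(\Delta_P)\longrightarrow\widetilde{\Hom}^0(B_{\gposet_kP})\longrightarrow 0$$
produced in \ref{sec:simplicial} from the injection $\pi^*:C^{k-1,k,k+1}\to B_{\gposet_kP}$ whose cokernel is $\bigoplus_{p\in P^{k-1}}\widetilde{\Hom}^0(\Delta_{P_{\geq p}})$ concentrated in degree $k-1$. Exactness at $\widetilde{\Hom}^k(\Delta_P)$ together with $\widetilde{\Hom}^0(B_{\gposet_kP})=0$ forces $\bigoplus_p\lambda_p$ to be surjective, so $\widetilde{\Hom}^k(\Delta_P)=\sum_{p\in P^{k-1}}\ima(\lambda_p)=L_{k,P}$, as claimed.

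All the substantive work has already been carried out upstream: checking (P1)--(P5) for $\gposet_kP$, proving \ref{lem:induced}, \ref{lem:asum} and \ref{prop:garlandstructure}, and constructing $\pi^*$ with the induced long exact sequence. Within the corollary itself the only point that needs a moment of care is the dictionary between $\alpha_{A_a}$ and the classical normalized link-graph spectral gap, together with the degenerate situations it must cover --- a component of $\Gamma_{\gposet_kP}$ that is a single vertex, where the infimum defining $\alpha_{A_a}$ runs over the empty set so $\alpha_{A_a}=+\infty$ and the inequality is vacuous, or the case $\ker\fb_1=0$, where $B_{\gposet_kP}$ is exact trivially. In each of these the hypothesis of the final clause of \ref{prop:garlandstructure} is still satisfied, so no extra argument is required, and I do not expect any obstacle beyond keeping this correspondence straight.
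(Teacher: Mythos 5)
Your proposal is correct and follows exactly the route the paper intends: the corollary is deduced by applying \ref{prop:garlandstructure} (with $n_0=k+1$, $n_{010}=1$, giving the threshold $\frac{k}{k+1}$) to the Garland poset $\gposet_kP$, and then reading off the surjectivity of $\bigoplus_p\lambda_p$ from the long exact sequence associated to $\pi^*$. Your extra remarks on the degenerate cases (single-vertex link components, $\ker\fb_1=0$) are consistent with the paper's conventions on empty infima and suprema and do not change the argument.
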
 

If all $P_{> p}$ for $p \in P^{k-1}$ are connected and $P$ is the face poset 
of a locally finite simplicial complex then $L_{k,P} =0$ and this is 
the original 
Garland result. 
Note that the gap condition for an $a\in S^{-1}$ means that the spectrum of $\laplace^+_{A_a}$ is contained in $\{0\}\cup (\frac{k}{k+1},2]$.  
Also note that if the link graph is regular of degree $d$ and has diameter 
at least four then the Alon-Boppona bound (see \cite{Lub,MarSpi}) 
implies that  this spectral bound can not be satisfied unless $d\geq k^2$.  

\subsection{Cubical}
\label{sec:cubical}

Let $P$ be a locally finite cubical poset.  This means that every lower interval is 
isomorphic to a product of copies of the three element poset 
$$
\begin{tikzpicture}[scale=1, every node/.style={font=\large}]
  \node (lambda) at (-2,0.7) {$\bigwedge = $};
  \node (minus) at (-1,0) {$-$};
  \node (plus)  at ( 1,0) {$+$};
  \node (star)  at ( 0,1.5) {$*$};

  \draw (minus) -- (star);
  \draw (plus) -- (star);
\end{tikzpicture}$$
on $\{ -,+,\ast\}$ and every upper interval is finite.  
Equivalently, $P$ is the face poset (excluding the empty face) of a locally finite complex $\Box_P$ of cubes, with a restriction analogous 
to that for $\Delta$-complexes in the simplicial case. We write $P^k$ for the elements of $P$ covering $2k$ elements and $P^K=\cup_{k\in K}P^k$ for the rank selected subposet of $P$.  

For each dimension $k\geq 1$ we construct a Garland poset $\gposet_kP
= (S,\leq,w,n,\rho)$ with 
$$S^{\{0,1\}}=P^{\{k,k+1\}} \text{ and } S^{-1}=\big\{\,[\,(a,b)\,]\,\big|\,P^{k-1}\cup\{\varphi\} \ni a< b \in P^k\,\big\}$$ with $[(a,b)]$ the vertices of 
the connected component containing $(a,b)$ of the graph $\Gamma$. 
Here $\Gamma$ is the graph with vertices $$\big\{\,(a,b)\in(\{\varphi\}\cup P^{k-1})\times P^k\,\big|\, a<b\hbox{ or } a=\varphi\,\big\}$$ and edges 
$$\big\{\,(\,b,c,b'\,)\in P^k\times P^{k+1}\times P^k\,\big|\,b<c>b'\not= b\,\big\}$$ where $(\,b,c,b'\,)$ connects the vertices $(b \bowtie b',b)$ and $(b \bowtie b',b')$ and finally $b \bowtie b'$ is the infimum of $b$ and $b'$ in $P^{\{k-1,k\}}$ if one exists and $\varphi$ otherwise.
We set $[(a,b)]<b$ 
and define $P^{\{k-1,k,k+1\}}_+=P^{\{k-1,k,k+1\}}\cup\{\varphi\}$ with 
$\varphi<b$ for any $b\in P^k$.  
By these definitions 
the map 
$\pi:S\rightarrow P^{\{k-1,k,k+1\}}_+$ with $\pi(\,[(a,b)]\,)=a$ and 
$\pi(c)=c$ for $c \in S^{\{0,1\}}$ is order preserving.  
Note that $\gamma:\Gamma\rightarrow P^{\{k,k+1\}}$ with $\gamma(p,b)=b$ and $\gamma(b,c,b')=c$ is an isomorphism between $\Gamma$ and $\Gamma_{\gposet_k P}$.  

If $n_0=2k+1$, $n_1=\binom{2k+2}{2}$ and $n_{010}=1$ and 
$\rho$ the obvious choice then $\gposet_k P$ clearly satisfies 
(P1), (P2), (P3) and (P4) in the definition of a Garland poset. 

For the cubical case the analog of a total ordering of the vertices used 
in the simplicial case is the following notion of $k$-collar. Here for 
$(\lambda_1,\ldots, \lambda_{k+1})
\in \bigwedge^{k+1}$ and $J = \{ j_1 < \cdots < j_r \}$ we write
$(\lambda_1,\ldots, \lambda_{k+1})_J$ for 
$(\lambda_{j_1},\ldots, \lambda_{j_r})$.

\begin{Definition}
A $k$-collar of $P$ is a choice of isomorphisms 
$\sigma_c:P_{\leq c}\rightarrow\bigwedge^{k+1}$ 
for each $c\in P^{k+1}$ such that they are compatible in the sense that if $c>b<c'$ 
there are two $k$-element subsets $J=[k+1]-\{j\}$ and $J'=[k+1]-\{j'\}$ 
such that the restrictions of $(\sigma_c)_J$ and $(\sigma_{c'})_{J'}$  
to $P_{\leq b}$ are the same isomorphism from $P_{\leq b}$ to $\bigwedge^ k$
and the restrictions of $(\sigma_c)_{\{j\}}$ and $(\sigma_{c'})_{\{j'\}}$ to $P_{\leq b}$ are the same constant (either $-$ or $+$ in $\bigwedge$). 
\end{Definition}

Not all cube complexes have such a choice. A complex $P$ is called 
$k$-monodromy free if it has a $k$-collar. In \ref{fig:mono} we show two 
typical scenarios which obstruct $k$-monodromy freeness for $k =1$. 
After fixing a $2$-cube $c$ and an isomorphism $\sigma_c$ 
to $\bigwedge^ 2$ in each example the images of the remaining vertices in 
$\bigwedge^2$ are determined.
Starting from a $2$-cube with blue labels and then 
greedily assigning the images of the neighboring $2$-cubes 
always yields a contradiction. 
In \ref{fig:mono} the red labels indicate the
vertices where obstructing monodromy freeness.

\begin{figure}[ht]
	\hskip-1cm\begin{subfigure}{0.3\textwidth}
		
		\begin{tikzpicture}[line join=round,line cap=round,scale=0.7]

\coordinate (A) at (-1,4);      
\coordinate (B) at (0,4);    
\coordinate (C) at (-1,3);    
\coordinate (D) at (0.4,2.6);  
\coordinate (E) at (4.3,1.4);  
\coordinate (F) at (2.3,1.7);  
\coordinate (G) at (-0.4,0);    
\coordinate (H) at (0.8,1.3);  

\fill[gray!30] (A) -- (B) -- (D) -- (C) -- cycle;      
\fill[gray!30] (B) -- (E) -- (F) -- (D) -- cycle;      
\fill[gray!30] (C) -- (D) -- (H) -- (G) -- cycle;             
\fill[gray!30] (H) -- (F) -- (E) -- (G) -- cycle;      

\draw[thick] (A) -- (B) -- (E) -- (G) -- (C) -- (A);

\draw[thick] (C) -- (D) -- (B);
\draw[thick] (D) -- (F) -- (E);
\draw[thick] (G) -- (H) -- (F);
\draw[thick] (D) -- (H) -- (G) ;
			\fill[red] (D) circle (2.5pt) node[right] {$--$};
			\fill[red] (C) circle (2.5pt) node[left] {$+-$};

			\fill[blue] (A) circle (2.5pt) node[left] {$++$};
			\fill[blue] (B) circle (2.5pt) node[above] {$-+$};
			\fill[blue] (E) circle (2.5pt) node[above] {$++$};
			\fill[blue] (F) circle (2.5pt) node[above] {$+-$};
			\fill[red] (G) circle (2.5pt) node[below] {$+-$};
			\fill[red] (H) circle (2.5pt) node[right] {$--$};

\end{tikzpicture}
		\caption{}
	\end{subfigure}
	\hskip3cm
	\begin{subfigure}{0.3\textwidth}
		\begin{tikzpicture}[scale=1, line join=round, line cap=round]

\def\a{2.0}
\def\b{0.5}

\fill[gray!30]
  plot[smooth, variable=\t, domain=180:360]
    ({(\a + \b*cos(\t/2))*cos(\t)},
    {0.42*(\a + \b*cos(\t/2))*sin(\t)})
  --
  plot[smooth, variable=\t, domain=360:180]
    ({(\a - \b*cos(\t/2))*cos(\t)},
     {0.42*(\a - \b*cos(\t/2))*sin(\t)})
  -- cycle;

\fill[gray!30]
  plot[smooth, variable=\t, domain=0:180]
    ({(\a + \b*cos(\t/2))*cos(\t)},
     {0.42*(\a + \b*cos(\t/2))*sin(\t)})
  --
  plot[smooth, variable=\t, domain=180:0]
    ({(\a - \b*cos(\t/2))*cos(\t)},
     {0.42*(\a - \b*cos(\t/2))*sin(\t)})
  -- cycle;

\draw[thick]
  plot[smooth, variable=\t, domain=0:360]
    ({(\a + \b*cos(\t/2))*cos(\t)},
     {0.42*(\a + \b*cos(\t/2))*sin(\t)});

\draw[thick]
  plot[smooth, variable=\t, domain=0:360]
    ({(\a - \b*cos(\t/2))*cos(\t)},
     {0.42*(\a - \b*cos(\t/2))*sin(\t)});

\foreach \t in {15,45,...,345} {
  \draw[black!60!black, thick]
    ({(\a - \b*cos(\t/2))*cos(\t)},
     {0.42*(\a - \b*cos(\t/2))*sin(\t)})
    --
    ({(\a + \b*cos(\t/2))*cos(\t)},
     {0.42*(\a + \b*cos(\t/2))*sin(\t)});
     \pgfmathparse{int(mod(int(((\t/15)-1)/2),2))< 1 ? int(1) : int(0)};
     \ifnum\pgfmathresult=0
         \fill[blue!60]
    ({(\a - \b*cos(\t/2))*cos(\t)},
     {0.42*(\a - \b*cos(\t/2))*sin(\t)})
			circle (1.5pt) node[right] {$++$};
		\else
			\ifnum \t > 15
  \fill[blue!60]
    ({(\a - \b*cos(\t/2))*cos(\t)},
     {0.42*(\a - \b*cos(\t/2))*sin(\t)})
     circle (1.5pt) node[right] {$+-$};
     \fi
     \fi
  \fill[blue!60]
    ({(\a + \b*cos(\t/2))*cos(\t)},
     {0.42*(\a + \b*cos(\t/2))*sin(\t)})
			circle (1.5pt);
}
  \fill[red!60]
    ({(\a + \b*cos(15/2))*cos(15)},
     {0.42*(\a + \b*cos(15/2))*sin(15)})
			circle (1.5pt) node[right] {$+-$};
         \fill[red!60]
    ({(\a - \b*cos(15/2))*cos(15)},
     {0.42*(\a - \b*cos(15/2))*sin(15)})
			circle (1.5pt) node[left] {$+-$};

\end{tikzpicture}
		\caption{}
	\end{subfigure}
	\caption{Two cubical complexes which are not monodromy free}
	\label{fig:mono}
\end{figure}


Here is another interpretation of $k$-monodromy freeness.
Call the connected component $[(a,b)]$ of $(a,b)$ in $\Gamma_{\gposet_k P}$ geometric if $a\in P^{k-1}$.
In this case it is a component of the link graph of $a$ in $\Box_P$ and transversal if 
$a=\varphi$ (see \ref{sec:6} for examples).  We call
$P$ transversally finite if all transversal components are finite
and in this case they will turn out to be another source of classes in 
$\widetilde{\Hom}^k(\Box_P)$ not arising in the simplicial case.  
Call $[(\varphi,b)]\in S^{-1}$ monodromy-free if the restriction of $\gamma$ to $\Gamma_{(\varphi,b)}$ extends to a cover preserving poset map to $P$.  This extension can be used to construct a collar and $P$ is $k$-monodromy free iff every element of $S^{-1}$ is (See \ref{sec:6} for examples).  

If $P$ is $k$-monodromy free then fix a compatible family 
$\{\sigma_c\}_{c\in P^{k+1}}$ as above and consider the induced orientation on 
$P^{\{ k-1,k,k+1\}}$ with $u_{ab}=(-1)^r$ if there is some $r$ with 
$(\sigma_b)_{\{r\}}$ restricted to $P_{\leq a}$ being the constant function 
$+\in\bigwedge$ and $u_{ab}=-(-1)^r$ otherwise.  (In the latter case there is a unique choice of $r$ with $(\sigma_b)_{\{r\}}$ restricted to $P_{\leq a}$ 
being the constant function $-\in\bigwedge$.)  Use $\pi$ to pull the 
orientation back to $S$ with $w_{xy}=u_{\pi(x)\pi(y)}$.  Now (P5) clearly holds and $(S,\leq,w,n,\rho)$ is a Garland poset.  

We adopt the notations for chain complexes and homology from the simplicial case. 
Note that $\pi$ again induces an injective map of three term cochain complexes $\pi^*:C^{k-1,k,k+1}\rightarrow B_{\gposet_kP}$ with cokernel 
$B^{-1}_{\gposet_kP} \slash C^{k-1}$ supported only in one degree.  
This time the cokernel is generated by both the image $L_{k,P}$ of the geometric link terms and the image $T_{k,P}$ of classes of transversal components of 
$\Gamma_{S}$ under the connecting homomorphism.
Thus by \ref{prop:garlandstructure} and \ref{thm:garland}:

\begin{cor}\label{cor:cubicalgarland} 
  If $P$ is a locally and transversally finite $k$-monodromy free cubicial 
  poset and the spectral gap 
  of every connected component of the link graph $\Gamma_{\gposet_kP}$ is 
	greater than $\frac{2k}{2k+1}$ then $\widetilde{\Hom}^k(\Box_P)=L_{k,P}+T_{k,P}$.  
\end{cor}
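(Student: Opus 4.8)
The plan is to carry out, in the cubical setting, the same two-stage argument as for \ref{cor:simplicialgarland}: first use the Garland machinery of \ref{sec:2} and \ref{sec:3} to prove that $B_{\gposet_kP}$ is exact, and then extract $\widetilde{\Hom}^k(\Box_P)$ from a long exact sequence. Before anything else I would check that the data $\gposet_kP=(S,\leq,w,n,\rho)$ assembled above really is a Garland poset. Properties (P1)--(P3) hold with $n_0=2k+1$, $n_1=\binom{2k+2}{2}$ and $n_{010}=1$, as already observed. For (P4) one needs each $S_{>a}$ finite and connected: connectedness is automatic because $S^{-1}$ indexes the connected components of $\Gamma$, finiteness of a geometric component follows from local finiteness of $P$, and finiteness of a transversal component is exactly transversal finiteness. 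For (P5) one needs a compatible orientation $w$, and this is where $k$-monodromy-freeness enters: for $a\in\pi^{-1}(\varphi)$ the isomorphism $P(a)\cong S_{>a}\times\bigwedge^{k}$ lets one lift a cellular orientation of $\Box_P$ coherently to the cover relations of $S$ meeting such an $a$, while on the remaining cover relations the cellular differential of $\Box_P$ works just as in the simplicial case. Hence $\gposet_kP$ is a Garland poset and \ref{prop:garlandstructure} applies to $G=(A_{\gposet_kP},B_{\gposet_kP},\fa)$.

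Next I would feed in the spectral hypothesis. Here $\frac{n_0-n_{010}}{n_0}=\frac{2k}{2k+1}$, and by the discussion preceding the corollary $\alpha_{A_a}$ is the spectral gap of the left-normalized graph Laplacian of the component of $\Gamma_{\gposet_kP}$ indexed by $a$. Thus the assumption that every such gap exceeds $\frac{2k}{2k+1}$ is precisely the inequality $\frac{n_0-n_{010}}{n_0}<\alpha_{A_a}$ for all $a\in S^{-1}$, and the final assertion of \ref{prop:garlandstructure} gives that $B_{\gposet_kP}$ is acyclic, that is, $\widetilde{\Hom}^0(B_{\gposet_kP})=0$.

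It then remains to do the homological bookkeeping. The poset map $\pi:S\rightarrow P^{k-1,k,k+1}\cup\{\varphi\}$ (the identity on $S^0$ and $S^1$) induces an injective map of three-term cochain complexes $\pi^*:C^{k-1,k,k+1}\rightarrow B_{\gposet_kP}$ whose cokernel $Q=B^{-1}_{\gposet_kP}/C^{k-1}$ is concentrated in the bottom degree; there $Q$ splits as the direct sum of a geometric part $\bigoplus_{p\in P^{k-1}}\widetilde{\Hom}^0(\Box_{P_{\geq p}})$ and a transversal part with one basis element for each transversal component of $\Gamma$. Taking the long exact cohomology sequence of $0\rightarrow C^{k-1,k,k+1}\rightarrow B_{\gposet_kP}\rightarrow Q\rightarrow 0$ and using that $Q$ lives only in the bottom degree, one obtains the exact sequence
$$Q\xrightarrow{\partial}\widetilde{\Hom}^k(\Box_P)\longrightarrow\widetilde{\Hom}^0(B_{\gposet_kP})\longrightarrow 0,$$
in which the image of the connecting map $\partial$ is, by construction, the sum $L_{k,P}+T_{k,P}$ of the images of the geometric and transversal summands of $Q$. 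Since $\widetilde{\Hom}^0(B_{\gposet_kP})=0$ by the previous step, $\partial$ is surjective, and therefore $\widetilde{\Hom}^k(\Box_P)=L_{k,P}+T_{k,P}$.

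I expect the \emph{main obstacle} to lie not in the Garland input, which is formal once $\gposet_kP$ is known to be a Garland poset, but in the content of the last paragraph: checking that $\pi^*$ is genuinely a chain map, injective with cokernel supported in a single degree, and that this cokernel, together with the image of its connecting homomorphism, is correctly identified with $L_{k,P}$ and $T_{k,P}$. This rests on a careful analysis of the cubical face poset, of the role of the auxiliary element $\varphi$, and of how $k$-monodromy-freeness makes the orientation $w$ (and hence the whole complex $B_{\gposet_kP}$) well defined, and is the part carried out in detail in \ref{sec:5}.
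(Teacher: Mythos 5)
Your proposal is correct and follows essentially the same route as the paper: verify (P1)--(P5) for $\gposet_kP$ (with transversal finiteness giving (P4) and $k$-monodromy-freeness giving the orientation for (P5)), apply \ref{prop:garlandstructure} with $\frac{n_0-n_{010}}{n_0}=\frac{2k}{2k+1}$ to get acyclicity of $B_{\gposet_kP}$, and then read off $\widetilde{\Hom}^k(\Box_P)=L_{k,P}+T_{k,P}$ from the long exact sequence of $\pi^*$. Your closing remark correctly locates where the remaining technical work (the chain-map and cokernel identifications) lives.
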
 

If $P$ has $k$-monodromy there is a similar statement using the 
same Laplacian for geometric link graphs but one on a larger local system 
for the transversal links. 

The gap condition means that both the connected components of the geometric 
link graph and the transversal link graph have positive Laplacian spectra 
contained in $\{0\}\cup(\frac{2k}{2k+1},2]$.  
Note that if the link graph is regular of degree $d$ and has diameter at least four then the Alon-Boppona bound implies that this spectral bound can not be satisfied unless 
$d\geq 4k^2 $. 

\begin{Example}
  A $2$-dimensional torus gives somewhat misleading but easy to draw examples 
  of related simplicial and cubical complexes for which one can compare the 
  above criteria.  

  Write $\TT^ 2_\Box$ for the standard cubical subdivision of the $2$-dimensional 
  torus with $16$ vertices and $\TT^ 2_\Delta$ for its simplicial refinement also 
  with $16$ vertices and with all vertex degrees six.  The former is the 
  cellular product and the latter a simplicial product of two four cycles.  
  Write $P^\Box$ and $P^\Delta$ for their face posets. Take $k=1$ and note 
  that the associated Garland poset link graphs $\Gamma_{\gposet_1P^\Box}$ 
  and $\Gamma_{\gposet_1P^\Delta}$ are the disjoint union of $24$ squares 
  and $16$ hexagons respectively. The squares include $16$ arising as the 
  links of the $16$ vertices and $4$ transversal components in each of the 
  two directions.  The hexagons are just the links of the $16$ vertices.  

  Note, that the vertex links here are dual to cocycles representing classes 
  in $L_{1,P^\Box}$ and $L_{1,P^\Delta}$ which are both trivial but the 
  transversals are dual to nontrivial ones in $T_{1,P^\Box}$.  

  Write $\Lambda_G=I_{VG}-\frac{1}{deg}Adj_G$ the identity minus the adjacency 
  matrix divided by the vertex degrees for the normalized positive graph 
  Laplacian in the case that $G$ is regular and note that 
  $\Lambda_G\1={\bf 0}$ so $\{0\}\subseteq \Spec(\Lambda_G)\subseteq [0,2]$ and 
  in this case $\Lambda_{C_4}$ and $\Lambda_{C_6}$ are both tridiagonal 
  matrices with entries $0$, $1$ and $\frac{-1}{2}$ and 
  $\Spec(\Lambda_{C_4})=\Spec(\Lambda_{\Gamma_{\gposet_1P^\Box}})=\{0,1,2\}$ 
  while $\Spec(\Lambda_{C_6})=\Spec(\Lambda_{\Gamma_{\gposet_1P^\Delta}})=\{0, \frac{1}{2}, \frac{3}{2}, 2\}$, so the relevant spectral gaps are $1$ for the 
  cubical case and $\frac{1}{2}$ for the simplicial one.

  For the cubical case the hypotheses of \ref{cor:cubicalgarland} 
  hold since $1> 1-\frac{1}{2+1}=\frac{2}{3}$. Hence in this case the 
	conclusion also holds and $\widetilde{\Hom}^1(\TT^2_\Box)=\QQ^2=T_{1,P^\Box}$ which 
  is generated by various pairs of the eight transversal classes.

  For the simplicial case the hypotheses of \ref{cor:simplicialgarland} 
  fail since $\frac{1}{2}\not> 1-\frac{1}{1+1}=\frac{1}{2}$.  In this case 
	the conclusion also fails since $\widetilde{\Hom}^1(\TT^2_\Delta) =\QQ^2\neq\{0\}
  =L_{1,P^\Delta}$.
\end{Example}

\section{Proofs} \label{sec:proofs}
\label{sec:5}

Computations involving Garland structures arising from posets are clarified 
with a diagrammatic notation. Such a diagram is a $3$-level poset with nodes in 
levels from bottom to top corresponding to 
elements in $S^{-1}$, $S^0$ and $S^1$. 
Each node corresponds to a
summation over the elements of the corresponding $S^i$. 
Any type of edge between nodes 
indicates that the summations are coupled by an order relation. 
Nodes are maps $S^i \rightarrow \RR$. When denoted by $\ca$ for $i=-1$ or 
$\cb$ for $i=0$ they define $\ell_2$-cochains. 
We use ''$\bullet$'' to denote the 
constant map $1$ and a node marked with $a \in S^{-1}$, $b \in S^0$ or
$c \in S^1$ to denote the characteristic function of that element.
Cover relations indicated by double edges are weighted by $w$. 
At most one node per level can also have single or double parentheses.
This indicates that the summation index of this node is used as a subscript of 
the variable $z$, the full subscript is obtained by reading the subscripts
from bottom to top, duplicating doubly parenthesized indices.

\begin{Example}
The formula
$$\sum_{c\in S^1}\sum_{b\in S^0_{<c}}\sum_{a\in S^{-1}_{<b}}z_c \,w_{bc} \,\cb(b)\,=\,\sum_{c\in S^1}\sum_{b\in S^0_{<c}}\sum_{a\in S^{-1}_{<b}}\sum_{a'\in S^{-1}_{<c}}z_{a'cc}\, w_{bc}\, \cb(b)$$ in $B^1$ becomes
\end{Example}


\begin{center}
\vskip5pt
\begin{tikzpicture}
  \node (top1) at (-1,0) {$(\bullet)$};
  \node [below of=top1] (mid1) {$\cb$};
  \node [below of=mid1] (bot1) {$\bullet$};
	\draw [black,  thick, shorten <=-2pt, shorten >=-2pt] (mid1) -- (bot1);
    \draw [double, black,  thick, shorten <=-2pt, shorten >=-2pt] (top1) -- (mid1);

    \node [right =.4cm of mid1] (eq3) {\begin{Large} =\end{Large}};

    \node (top3) at (2,0) {$(\!(\bullet)\!)$};
  \node [below left=0.4cm and 0.3cm of top3] (mid3) {$\cb$};
  \node [below of=mid3] (bot13) {$\bullet$};
  \node [below=1.4cm of top3] (bot23) {$(\bullet)$};
   \draw [black,  thick, shorten <=-2pt, shorten >=-2pt] (mid3) -- (bot13);
    \draw [double, black,  thick, shorten <=-2pt, shorten >=-2pt] (top3) -- (mid3);
	\draw [black,  thick, shorten <=-2pt, shorten >=-2pt] (top3) -- (bot23);
\end{tikzpicture}
\end{center}

  Note that for every pair of disjoint diagrams $\star$ and $*$ the
  identity 
  \vskip1pt
	\begin{center}
		\begin{align}
\label{id:0}
  \begin{aligned}
\begin{tikzpicture}
  \node (top3) at (3,0) {$\star$};
  \node [below of=top3] (mid3) {$\bullet$};
  \node [below of=mid3] (bot3) {$*$};
\draw [double, black,  thick, shorten <=-2pt, shorten >=-2pt] (mid3) -- (bot3);
    \draw [double, black,  thick, shorten <=-2pt, shorten >=-2pt] (top3) -- (mid3);
\node [right =.5cm of mid3] (eq1) {\begin{Large} =\end{Large}};
\node (top4) at (6,0) {};
\node [below of=top4] (mid4) {$\mathlarger{\mathlarger{0}}$};
\end{tikzpicture}
  \end{aligned}
		\end{align}
	\end{center}

  holds by the orientation condition \eqref{eq:w} of (P5). 

\medskip

\begin{proof}[Proof of \ref{lem:induced}]

  \noindent{\sf $A_\gposet$ is a sequence of bounded maps:} 
	Since the $z_{ax}$ form orthogonal bases of the respective spaces 
	for boundedness of the maps in $A_\gposet$ it suffices to
	show that $\fa_i(z_{az})$ is uniformly bounded by a constant times
	$\|z_{az}\|$. 

  If $z_{aa}$ is a basis element of $A^{-1}$ then by definition 
  $\|z_{aa}\|^2$ is the number of $c \in S^1$ with $a < c$ and
	\begin{align*}
		\| \fa_0(z_{aa})\|^2 & = \|\sum_{ b \in S_{>a}^0} w_{ab} z_{ab} \|^2
		\\ & = \| \sum_{ b \in S_{>a}^0} w_{ab} \sum_{ c \in S_{>b}^1} z_{abc}\|^2
		\\ & = \sum_{ c \in S_{>a}^1} \sum_{b \in S_{>a,<c}^0} \| z_{abc}\|^2
		\overset{(P5)}{\leq} 2 \| z_{aa}\|^2. 
	\end{align*}
  If $z_{ab}$ is a basis element of $A^{0}$ then 
	\begin{align*}
		\| \fa_1(z_{ab})\|^2 & = \|\sum_{c \in S_{>b}^1} w_{bc} z_{ac} \|^2
		\\ & = \sum_{c \in S_{>b}^1} \| w_{bc} z_{acc} \|^2 = |S_{> b}^1| = \|z_{ab}\|^2.
	\end{align*}

Since
	\begin{align*}
		\fa_1(\fa_0(z_{aa})) & = \fa_1(\sum_{b \in S_{>a}^0} w_{ab} z_{ab}) \\ & = 
		\sum_{ b \in S_{>a}^0} w_{ab} \sum_{c\in S_{>b}^1} w_{bc} z_{ac}) \\ & = 
	\sum_{c \in S_{>a}^1} z_{ac} \sum_{b \in S_{>a,<c}^0} w_{ab} w_{bc}  
	\overset{(P5)}{=} 0
	\end{align*}
{\sf $A_\gposet~:~A^ {-1} \xrightarrow{\fa_0} A^0 \xrightarrow{\fa_1} A^1$ is a complex}. 

	\smallskip

  \noindent{\sf $B_\gposet$ is a sequence of bounded maps} 
 since both $\fb^i$ are a composition of an orthogonal projection
to a closed subspace and a bounded map.

\smallskip
To show that 
\noindent{\sf $B_\gposet$ is a complex} 
we first need to consider the two projection maps $\fp_{B^i}$.
	For $\fp_{B^0}$ we need to check that $\fp_{B^ 0}(z_{ab}) = \frac{1}{n_0}z_{b}$ for $a  \in S^{-1}$ and $b \in S^ 0$. For that note that by 
	definition 
	$\frac{(z_{ab},z_{b'})}{(z_{b'},z_{b'})}= 0$ if $b \neq b'$ and 
	$\frac{(z_{ab},z_{b'})}{(z_{b'},z_{b'})}= \frac{1}{n_0}$ if $b = b'$.
 The latter case follows from the below two calculations together with (P1).  
 Since this is the first time we use a diagram proof we add the actual calculations in this and in the next case.

\begin{center}
    \begin{tikzpicture}
	  \node (top1) at (-10,0) {$\bullet$};
	  \node [below of=top1] (mid1) {$b$};
  \node [below of=mid1] (bot1) {$a$};
\draw [black,  thick, shorten <=-2pt, shorten >=-2pt] (mid1) -- (bot1);
\draw [black,  thick, shorten <=-2pt, shorten >=-2pt] (top1) -- (mid1);
	    \node [left =0cm of mid1] (co1) {\begin{Large} $(z_{ab},z_{b})$ = \end{Large} };
		  \node (top2) at (-6,0) {$\bullet$};
  \node [below of=top2] (mid2) {$b$};
  \node [below of=mid2] (bot2) {$\bullet$};
	  \node [left =-0.2cm of mid2] (co1) {\begin{Large} $(z_b,z_b)$ = \end{Large} };
\draw [black,  thick, shorten <=-2pt, shorten >=-2pt] (mid2) -- (bot2);
\draw [black,  thick, shorten <=-2pt, shorten >=-2pt] (top2) -- (mid2);
\end{tikzpicture}

\end{center}

Alternatively, 

\begin{align*}
	\fp_{B^ 0} (z_{ab}) & = \sum_{b' \in S^0} \frac{(z_{ab},z_{b'})}{(z_{b'},z_{b'})} z_{b'} 
= 
\frac{(z_{ab},z_{b})}{(z_{b},z_{b})} z_{b}
\end{align*}
and
\begin{align*} 
	(z_b,z_b)& = 
	(\sum_{a \in S^{-1}_{< b}} z_{ab}, \sum_{a \in S^{-1}_{< b}}z_{ab})
	= \sum_{a \in S^{-1}_{< b}} (z_{ab},z_{ab}) \\
	& =  \sum_{c' \in S^ {1}_{> b}}\sum_{c \in S^1_{> b}} \sum_{a \in S^{-1}_{< b}} (z_{abc},z_{abc'}) \\ 
	& = \sum_{c \in S^ {1}_{> b}} \sum_{a \in S^{-1}_{< b}} (z_{abc},z_{abc}) = \big|\, S^{1}_{> b} \times S_{< b}^{-1}\,\big|  
= \big|\, S^{1}_{> b}\,\big| \cdot n_0 
\end{align*}
and
\begin{align*} 
	(z_{ab},z_b)& = 
	(z_{ab} , \sum_{a' \in S^{-1}_{< b}} z_{a'b}) 
	 = 
	(z_{ab} , z_{ab})= 
	\sum_{c \in S^{1}_{> b}}
	(z_{abc} , z_{abc})= \big|\,S_{> b}^1 \,\big|
\end{align*}
yield the result.

For $\fp_{B^1}$ we need to check that 
	$\fp_{B^ 1}(z_{ac}) = \frac{1}{n_1}z_{c}$ for $a  \in S^{-1}$ and $c \in S^ 1$. For that note that by 
	definition
	$\frac{(z_{ac'},z_{c})}{(z_c,z_c)} = 0$ if $c \neq c'$ and
	$\frac{(z_{ac'},z_{c})}{(z_c,z_c)} = \frac{1}{n_1}$ 
	if $c = c'$.
   Again in the latter case 
	result follows from the below two calculations
together with (P2) and (P5). 

\begin{center}
    \begin{tikzpicture}
	  \node (top1) at (-10,0) {$c$};
	  \node [below of=top1] (mid1) {};
  \node [below of=mid1] (bot1) {$a$};
\draw [black,  thick, shorten <=-2pt, shorten >=-2pt] (top1) -- (bot1);
\draw [black,  thick, shorten <=-2pt, shorten >=-2pt] (top1) -- (mid1);
	  \node [left =0cm of mid1] (co1) {\begin{Large} $(z_{ac},z_c)$ = \end{Large} };
		  \node (top2) at (-6,0) {$c$};
  \node [below of=top2] (mid2) {};
  \node [below of=mid2] (bot2) {$\bullet$};
	  \node [left =-0.2cm of mid2] (co1) {\begin{Large} $(z_c,z_c)$ = \end{Large} };
\draw [black,  thick, shorten <=-2pt, shorten >=-2pt] (top2) -- (bot2);
\draw [black,  thick, shorten <=-2pt, shorten >=-2pt] (top2) -- (mid2);
\end{tikzpicture}

\end{center}

Again alternatively, 
\begin{align*}
	\fp_{B^ 0} (z_{ac}) & = \sum_{c' \in S^0} \frac{(z_{ac},z_{c'})}{(z_{c'},z_{c'})} z_{c'} 
= 
\frac{(z_{ac},z_{c})}{(z_{c},z_{c})} z_{c}
\end{align*}

and

\begin{align*}
	(z_c,z_c)& = 
	\sum_{a \in S^{-1}_{< c}}
	(z_{ac},z_{ac})& = 
	\sum_{a \in S^{-1}_{< c}}
	\sum_{b \in S^{0}_{< c} \cap S^ 0_{> a}}
	(z_{abc},z_{abc})
	= 2 \cdot n_1
\end{align*}

\begin{align*} 
	(z_{ac},z_c)& = 
	(z_{ac},z_{ac}) = 
	\sum_{b \in S^{0}_{< c} \cap S^ 0_{> a}}
	(z_{abc},z_{abc})
	=2.
\end{align*}

Now we can show that $B_\gposet$ is a complex:\vskip1pt

\begin{center}
  \begin{tikzpicture}
    \node (top1) at (-3,0) {};
    \node [below of=top1] (mid1) {};
  \node [below of=mid1] (bot1) {($\ca$)};
	  \node [left =0.1cm of mid1] (co1) {$\mathlarger{\mathlarger{\fb_1\,\fb_0}}$ \text{\begin{Huge} ( \end{Huge}}};
		  \node [right =-.1cm of mid1] (eq1) {\text{\begin{Huge}\,\,)\end{Huge} \begin{Large} =\end{Large}}};
  \node (top2) at (2.0,0) {};
  \node [below of=top2] (mid2) {($\bullet$)};
  \node [below of=mid2] (bot2) {($\ca$)};
\draw [double, black,  thick, shorten <=-2pt, shorten >=-2pt] (mid2) -- (bot2);
	  \node [left = -0.0cm of mid2] (co2) {$\mathlarger{\mathlarger{\fp_{B^1}\,\fa_1\,\fp_{B^0}}}$ \hskip-0.2cm\text{\begin{Huge} (\end{Huge}}};
		  \node [right =-.2cm of mid2] (eq2) {\text{\begin{Huge} ) \end{Huge} \begin{Large} = \end{Large}}};
\end{tikzpicture}
\end{center}

\smallskip

\begin{center}
\begin{tikzpicture}
  \node (top3) at (0,0) {};
  \node [below of=top3] (mid3) {($\bullet$)};
  \node [below of=mid3] (bot13) {$\ca$};
  \node [below left=0.3cm and 0.2cm of mid3] (bot23) {($\bullet$)};
\draw [double, black,  thick, shorten <=-2pt, shorten >=-2pt] (mid3) -- (bot13);
\draw [black,  thick, shorten <=-2pt, shorten >=-2pt] (mid3) -- (bot23);
	\node [left =0.01cm of mid3] (co3) {$\mathlarger{\mathlarger{\frac{1}{n_0}\, \fp_{B^1}\, \fa_1 \, \text{\begin{Huge} ( \end{Huge}}}}$};
		\node [right =-.2cm of mid3] (eq3) {\text{\begin{Huge})\end{Huge} \hskip-0.1cm\begin{Large} =\end{Large}} };
  \node (top1) at (4.3,0) {($\bullet$)};
  \node [below of=top1] (mid1) {$\bullet$};
  \node [below of=mid1] (bot11) {$\ca$};
  \node [below left=0.4cm and 0.3cm of mid1] (bot21) {($\bullet$)};
\draw [double, black,  thick, shorten <=-2pt, shorten >=-2pt] (mid1) -- (bot11);
\draw [black,  thick, shorten <=-2pt, shorten >=-2] (mid1) -- (bot21);
\draw [double, black,  thick, shorten <=-2pt, shorten >=-2pt] (top1) -- (mid1);
	\node [left =0.36cm of mid1] (co1) {$\mathlarger{\mathlarger{\frac{1}{n_0}\,\fp_{B^1}}}$\text{\begin{Huge} ( \end{Huge}}};
		\node [right =.4cm of mid1] (eq1) {\text{\begin{Huge})\end{Huge} \begin{Large} =\end{Large}}};
\end{tikzpicture}
\end{center}

\smallskip

\begin{center}
\begin{tikzpicture}
  \node (top2) at (0,0) {$(\bullet)$};
  \node [below left=0.5cm and 0.3cm of top2] (mid2) {$\bullet$};
  \node [below left=0.55cm and 0.3cm of mid2] (bot12) {$\bullet$};
  \node [below of=mid2] (bot22) {$\ca$};
  \node [below=1.4cm of top2] (bot32) {$(\bullet)$};
	\node [left=.5cm of mid2] (co2) {$\mathlarger{\mathlarger{\frac{1}{n_1\,n_0}}}$};
   \draw [black,  thick, shorten <=-2pt, shorten >=-2pt] (mid2) -- (bot12);
   \draw [double, black,  thick, shorten <=-2pt, shorten >=-2pt] (mid2) -- (bot22);
    \draw [double, black,  thick, shorten <=-2pt, shorten >=-2pt] (top2) -- (mid2);
	\draw [black,  thick, shorten <=-2pt, shorten >=-2pt] (top2) -- (bot32);
  \node [right =1cm of mid2] (eq2) {\begin{Large} =\end{Large}};

  \node (top3) at (3.5,0) {$(\bullet)$};
  \node [below left=0.4cm and 0.3cm of top3] (mid3) {$\bullet$};
  \node [below of=mid3] (bot13) {$\ca$};
  \node [below=1.3cm of top3] (bot23) {$(\bullet)$};

  \node [left=-.1cm of mid3] (co3) {$\mathlarger{\mathlarger{\frac{1}{n_1}}}$};
   \draw [double, black,  thick, shorten <=-2pt, shorten >=-2pt] (mid3) -- (bot13);
    \draw [double, black,  thick, shorten <=-2pt, shorten >=-2pt] (top3) -- (mid3);
	\draw [black,  thick, shorten <=-2pt, shorten >=-2pt] (top3) -- (bot23);

\node [right =.8cm of mid3] (eq3) {\begin{Large} = \end{Large}};
\node (top4) at (5,0) {};
\node [below of=top4] (mid4) {$\mathlarger{\mathlarger{0}}$};
\end{tikzpicture}
\end{center}
    
  The first and third equalities follow immediately from the definitions, the
  second and fourth equalities use the calculations of the projection maps.
  The fifth equality uses (P1) and the last equality follows from \ref{id:0}.
  
  \medskip
\end{proof}

\medskip

\begin{proof}[Proof of \ref{lem:asum}]
  By \ref{lem:induced} we know that $A_\gposet$ is a complex.
  Since each $A_a$ is closed under the two maps $\fa_i$ it follows that these are subcomplexes.
  For the exactness of each $A_a$ note that $A^{-1}_a\ni z_{aa}$ is one-dimensional with 
  $w_{ab}$ the coefficient of $z_{ab}$ in $\fa_0(z_{aa})$. By (P4) $S_{>a}$ is connected. If
  $S^{1}_{>a} = \emptyset$ then $S_{> a} = \{b\}$ or $ \emptyset$ and we are done. For $c \in S_{>a}^1$
  we have by (P5) that $S_{>a} \cap S_{< c} = \{b,b'\}$ with $b \neq b'$. 
  Assume there is $\phi\in \ker(\fa_1) \cap A_a^0$ which is not a boundary. We may choose 
  $\phi$ such that the number of $b \in S^0_{>a}$ in its support is minimal for cycles which are not boundaries. 
  For the coefficients $v_{ab}$, $v_{ab'}$ of
  $z_{ab}$ and $z_{ab'}$ in $\phi$ we then have $0=w_{bc}v_{ab} +w_{b'c}v_{ab'}$. 
  By (P5) this implies that $v_{ab} = w_{ab} v'$ and $v_{ab'} = w_{ab'} v'$ for some 
  $v'$. It follows that $\phi - \fa_0(v'\,z_{aa})$ is 
  a cycle supported on fewer terms. This shows that each $A_a$ is exact. 

  By the construction of $A_a^i$ the given orthogonal basis
  for $A^i$ is the disjoint union over $a$ of the given
  orthogonal bases of the $A_a^i$.
  Hence for each $i$ the sum over $a$ of the $A_a^i$  is an orthogonal sum which
  contains a basis of $A^i$. 
  It follows that $A_\gposet$ is the closure of the sum over $a$ of the exact complexes $A_a$ and
  hence $A_\gposet$ is an exact complex. 
\end{proof}

\medskip

\begin{proof}[Proof of \ref{prop:garlandstructure}] ~\\ 

  By \ref{lem:induced} and \ref{lem:asum} we have that $G$ is a Garland structure.

  \smallskip

\noindent{\sf (i):} 
  Take $\phi\in A^0$ of norm one and write $\phi = \sum_{a \in S'} \phi_a \in A^0$ with $0 \neq \phi_a \in A_a^0$ and $S' \subseteq S^{-1}$.
  The $\phi_a$ and their images under $\fa_1$ are mutually 
  orthogonal and 
  \begin{align*} 
	  \| \fa_1(\phi)\|^2 & = \sum_{a \in S'} \| \fa_1(\phi_a)\|^2 \\
	   & = \sum_{a \in S'} \| \phi_a \|^2 \cdot \| \fa_1(\frac{1}{\|\phi_a\|} \,\phi_a)\|^2\\
	   & \geq \sum_{a \in S'} \|\phi_a\|^2 \cdot \alpha_{A_a} \\ 
	  & \geq \inf_{a \in S^{-1}} \alpha_{A_a} \cdot 1.
  \end{align*}

  Hence $\alpha_{A_{S}} \geq \inf_{a \in S^{-1}} \alpha_{A_a}$.
  The other inequality is immediate.

  \medskip

\noindent{\sf (ii):} 
  Assume $\ker \fb_1^H = \ker \fb_1 \cap B_H^0 \neq 0$. 
  We want to evaluate the right hand side of \eqref{eq:beta} for 
  $\kappa=\frac{n_{010}n_1}{n_0^2}$.
  First, we compute the two terms in the numerator of the right hand side of \eqref{eq:beta}.
  Consider $\phi = \sum_{b \in S^{0}} \cb(b) z_b \in B^0$. Note that in our notation
  $\phi = ( \cb)$.
  Compute:

\begin{center}
  \begin{tikzpicture}
    \node (top1) at (0,0) {};
    \node [below of=top1] (mid1) {($\cb$)};
    \node [left =0cm of mid1] (nm11) {$\Big\|$};
    \node [right =0cm of mid1] (nm12) {$\Big\|^2$};

    \node [right =.5cm of mid1] (eq1) {\begin{Large} =\end{Large}};

    \node (top3) at (3,0) {($\bullet$)};
    \node [below of=top3] (mid3) {($\cb$)};
    \node [below of=mid3] (bot3) {($\bullet$)};
    \node [left =0cm of mid3] (nm31) {$\Big\|$};
    \node [right =0cm of mid3] (nm32) {$\Big\|^2$};
    \draw [black,  thick, shorten <=-2pt, shorten >=-2pt] (mid3) -- (bot3);
    \draw [black,  thick, shorten <=-2pt, shorten >=-2pt] (top3) -- (mid3);

    \node [right =.5cm of mid3] (eq3) {\begin{Large} =\end{Large}};

    \node (top4) at (6,0) {$\bullet$};
    \node [below of=top4] (mid4) {$\cb^2$};
    \node [below of=mid4] (bot4) {$\bullet$};
    \draw [black,  thick, shorten <=-2pt, shorten >=-2pt] (mid4) -- (bot4);
    \draw [black,  thick, shorten <=-2pt, shorten >=-2pt] (top4) -- (mid4);

    \node [right =0.1cm of mid4] (eq4) {\begin{Large} =\end{Large}};

    \node (top5) at (9,0) {$\bullet$};
    \node [below of=top5] (mid5) {$\cb^2$};
    \node [left =0.1cm of mid5] (co) {$\mathlarger{\mathlarger{n_0}}$};
    \draw [black,  thick, shorten <=-2pt, shorten >=-2pt] (top5) -- (mid5);
  \end{tikzpicture}
\end{center}

  \vskip20pt

\begin{center}
  \begin{tikzpicture}
    \node (top1) at (-4,0) {};
    \node [below of=top1] (mid1) {$\mathlarger{\mathlarger{\fa_1}}\text{\begin{Huge}\Huge(\end{Huge}}\,(\cb)\,\text{\begin{Huge}\Huge)\end{Huge}}$};
    \node [left =0cm of mid1] (nm11) {$\Big\|$};
    \node [right =0cm of mid1] (nm12) {$\Big\|^2$};

    \node [right =.5cm of mid1] (eq1) {\begin{Large} =\end{Large}};

    \node (top3) at (-1,0) {$(\!(\bullet)\!)$};
    \node [below of=top3] (mid3) {$\cb$};
    \node [below of=mid3] (bot3) {$(\bullet)$};
    \node [left =0cm of mid3] (nm31) {$\Big\|$};
    \node [right =0cm of mid3] (nm32) {$\Big\|^2$};
    \draw [black,  thick, shorten <=-2pt, shorten >=-2pt] (mid3) -- (bot3);
    \draw [double, black,  thick, shorten <=-2pt, shorten >=-2pt] (top3) -- (mid3);

    \node [right =.5cm of mid3] (eq3) {\begin{Large} =\end{Large}};

    \node (top4) at (2,0) {$\bullet$};
    \node [below left=0.5cm and 0.3cm of top4] (mid14) {$\cb$};
    \node [below right=0.5cm and 0.3cm of top4] (mid24) {$\cb$};
    \node [below right=0.5cm and 0.3cm of mid14] (bot4) {$\bullet$};
    \draw [black,  thick, shorten <=-2pt, shorten >=-2pt] (mid14) -- (bot4);
    \draw [black,  thick, shorten <=-2pt, shorten >=-2pt] (mid24) -- (bot4);
    \draw [double, black,  thick, shorten <=-2pt, shorten >=-2pt] (mid14) -- (top4);
    \draw [double, black,  thick, shorten <=-2pt, shorten >=-2pt] (mid24) -- (top4);

    \node [right =0.1cm of mid24] (eq4) {\begin{Large} $\overset{(P3)}{=}$\end{Large}};
  \end{tikzpicture}
  
  \begin{tikzpicture}
    \node (top5) at (0,0) {$\bullet$};
    \node [below left=0.5cm and 0.3cm of top5] (mid15) {$\cb$};
    \node [below right=0.5cm and 0.3cm of top5] (mid25) {$\cb$};
    \node [left =0.1cm of mid15] (co) {$\mathlarger{\mathlarger{n_{010}}}$};
    \draw [double, black,  thick, shorten <=-2pt, shorten >=-2pt] (mid15) -- (top5);
    \draw [double, black,  thick, shorten <=-2pt, shorten >=-2pt] (mid25) -- (top5);

    \node [right =0.1cm of mid25] (pl) {\begin{Large} +\end{Large}};

    \node (top6) at (6,0) {$\bullet$};
    \node [below of=top6] (mid6) {$\cb^2$};
    \node [left =0.1cm of mid6] (co) {$\mathlarger{\Big( \mathlarger{n_0-n_{010}\Big)}}$};
    \draw [black,  thick, shorten <=-2pt, shorten >=-2pt] (mid6) -- (top6);
  \end{tikzpicture}
\end{center} 
\vskip20pt  
\begin{center}
  \begin{tikzpicture}
    \node (top1) at (-4,0) {};
    \node [below of=top1] (mid1) {$\mathlarger{\mathlarger{\fb_1}}\,\text{\begin{Huge} ( \end{Huge}}\,(\cb)\,\text{\begin{Huge})\end{Huge}}$};

    \node [left =-0.2cm of mid1] (nm11) {$\Big\|$};
    \node [right =-0.2cm of mid1] (nm12) {$\Big\|^2$};

    \node [right =.2cm of mid1] (eq1) {\begin{Large} =\end{Large}};

    \node (top3) at (1,0) {$(\!(\bullet)\!)$};
    \node [below left=0.4cm and 0.3cm of top3] (mid3) {$\cb$};
    \node [below of=mid3] (bot13) {$\bullet$};
    \node [below=1.4cm of top3] (bot23) {$(\bullet)$};
    \node [right =1.2cm of mid3] (nm32) {$\Big\|^2$};
    \node [left=-.1cm of mid3] (co3) {$\mathlarger{\mathlarger{\frac{1}{n_1}}}$};
    \node [left=-.1cm of co3] (nm31) {$\Big\|$};
    \draw [black,  thick, shorten <=-2pt, shorten >=-2pt] (mid3) -- (bot13);
    \draw [double, black,  thick, shorten <=-2pt, shorten >=-2pt] (top3) -- (mid3);
    \draw [black,  thick, shorten <=-2pt, shorten >=-2pt] (top3) -- (bot23);

    \node [right =1.5cm of mid3] (eq3) {\begin{Large} =\end{Large}};

    \node (top4) at (5,0) {$\bullet$};
    \node [below left=0.5cm and 0.3cm of top4] (mid14) {$\cb$};
    \node [below right=0.5cm and 0.3cm of top4] (mid24) {$\cb$};
    \node [below of=mid14] (bot14) {$\bullet$};
    \node [below of=mid24] (bot34) {$\bullet$};
    \node [below=1.6cm of top4] (bot24) {$\bullet$};
    \node [left =0.1cm of mid14] (co) {$\mathlarger{\mathlarger{\frac{1}{n_1^2}}}$};
    \draw [double, black,  thick, shorten <=-2pt, shorten >=-2pt] (top4) -- (mid14);
    \draw [double, black,  thick, shorten <=-2pt, shorten >=-2pt] (top4) -- (mid24);
    \draw [black,  thick, shorten <=-2pt, shorten >=-2pt] (top4) -- (bot24);
    \draw [black,  thick, shorten <=-2pt, shorten >=-2pt] (mid24) -- (bot34);
    \draw [black,  thick, shorten <=-2pt, shorten >=-2pt] (mid14) -- (bot14);
  \end{tikzpicture}
\end{center}
  
\begin{center}
  \begin{tikzpicture}
    \node (top5) at (7,0) {$\bullet$};
    \node [below left=0.5cm and 0.3cm of top5] (mid15) {$\cb$};
    \node [below right=0.5cm and 0.3cm of top5] (mid25) {$\cb$};
    \node [left =0.1cm of mid15] (co) {$\mathlarger{\mathlarger{\frac{n_0^2}{n_1}}}$};
    \node [left =1.8cm of mid15] (eq5) {\text{\begin{Huge} =\end{Huge}}};
    \draw [double, black,  thick, shorten <=-2pt, shorten >=-2pt] (top5) -- (mid15);
    \draw [double, black,  thick, shorten <=-2pt, shorten >=-2pt] (top5) -- (mid25);
  \end{tikzpicture}
\end{center}
\vskip5pt
Thus the right hand side of \eqref{eq:beta} 
evaluates to 
$$\frac{\|\fa_1(\cb)\|^2 - \frac{n_{010}n_1}{n_0^2}\|\fb_1(\cb)\|^2}{\|\cb\|^2}=\frac{n_0-n_{010}}{n_0}.$$  
\end{proof}

\section{Examples}
\label{sec:6}

In this section we discuss how examples of simplicial and cubical complexes 
can be constructed to which \ref{cor:cubicalgarland} applies. 

\subsection{Moment angle complexes and branched coverings}
For every simplicial complex there is an associated moment angle cubical 
complex as described below (see also Chapter 4 of \cite{BP} for more 
details ). The Garland posets associated to these have 
geometric link graphs which are $1$-skeleta of links of the initial simplicial 
complex. The transversal link graphs will be $1$-skeleta of cubes and hence 
have small spectral gap. Taking branched covers will yield link graphs 
which are covers of the original and do not increase the spectral gaps.  
Finally a different quotient for which the constructed cover is again
a branched cover will produce link graphs 
which are quotients of a cover of the original. If the original link 
graph is regular this yields a fairly arbitrary regular link graph. 


If $K\subseteq 2^V$ is a simplicial complex then the associated moment 
angle subcomplex $X_K$ of $[0,1]^V$ is
$$X_K=\bigcup_{\genfrac{}{}{0pt}{}{\sigma \in K}{\omega, \nu}} [0,1]^\sigma\times\{0\}^\omega\times\{1\}^\nu$$

\medskip
\noindent where $\sigma$, $\omega$ and $\nu$ partition $V$.
For the simplicial complex $K$ on $V = \{1,2,3\}$ and with maximal faces
$\{1,2\}$ and $\{1,3\}$ 
\ref{fig:first}(B) depicts the corresponding moment angle complex
$X_K$ 
with the empty squares corresponding to directions '2" and "3". 
Note that since a moment angle complex $X_K$ is a 
subcomplex of a cube for each $k$ its level $k$ Garland poset $S$ is 
$k$-monodromy-free by the second criterion.  It has link graph

$$\Gamma_{S}=
\underbrace{\mathop{\coprod}_{\genfrac{}{}{0pt}{}{\sigma\in K^{k-2}}{\omega\subseteq V \setminus \sigma}} \link^1_K(\sigma)}_{\text{geometric link graph}} \amalg  
\underbrace{\mathop{\coprod}_{\genfrac{}{}{0pt}{}{\tau\in K^{k-1}}{\omega\subseteq V \setminus \tau}} Q_{|\link^0_K(\tau)|}}_{\text{transversal link graph}}$$

\noindent where $K^i$ is the set of $i$-faces of $K$, 
$\link^i_K(\sigma)$ is the $i$-skeleton of the simplicial link of $\sigma$ in 
$K$ and $Q_i$ is the $1$-skeleton of an $i$-dimensional cube. 

Note that for $r \geq 1$ the spectral gap for $Q_r$ is only $\frac{2}{r}$ so 
while the spectral gaps for the geometric link components may be large, 
those of the transversal ones are not. Indeed, the hypotheses of 
\ref{cor:cubicalgarland} for the 
transversal graph can only hold if $r \leq 2+\frac{1}{k}$. Hence the 
$k$-skeleton of $K$ is a pseudomanifold with boundary so the
geometric link graphs are paths or cycles. A computation shows that for the spectral gap to be more than $\frac{2k}{2k+1}$ there are 
at most three vertices in the components of the geometric link graph which are
paths and at most four in the components which 
are cycles (unless $k=1$ where a pentagon is also allowed). Hence \ref{cor:cubicalgarland} applies to the torus which is the moment angle complex of the 
boundary of a cross-polytope but very few others.

Next we discuss if and how branched covers can change the situation. We
start with a purely poset theoretic definition of branched covers. 

\begin{Definition} 
  A map $f:P\rightarrow Q$ of cubical posets is a $k$-branched cover 
  if for any $q\in Q$ with rank $k$ every restriction of 
  $f: f^{-1}\big(\,Q_{\geq q}\,\big)\rightarrow Q_{\geq q}$ 
  to a connected component of the domain is an isomorphism.
  If the number of connected components is the same for every choice of $q$ 
  call this number the degree of $f$.  
\end{Definition}

Geometrically this corresponds to the following.  
If $X$ is a cubical complex and $Y^\circ$ is a topological cover of the subspace $X^\circ$ of $X$ obtained by deleting all cubes of dimension less than $k$ then the associated cubical complex $Y$ is $k$-branched over $X$.  The covering map followed by the inclusion gives a map $\pr^\circ$ from $Y^\circ$ to $X$.  A universal description of $Y$ is obtained by factoring $\pr^\circ$ into a dense inclusion into $Y$ followed by a proper map $\pr$ to $X$.  The cubical structure on $Y$ is obtained from the decompositions of $Y^\circ$ and $X$ into open cubes.  A synthetic description of $Y$ is obtained by expressing $Y^\circ$ as a gluing (colimit) of spaces $C^\circ$ which are cubes $C$ (of dimension at least $k$) with all faces of dimension less than $k$ deleted.  The complex $Y$ has the same gluing diagram but with each deleted cube $C^\circ$ replaced by the full cube $C$.  

Note that if $Q$ and $P$ are the face posets of $Y$ and $X$ respectively 
then the graph $\Gamma_{\gposet_kQ}$ is a cover of the graph 
$\Gamma_{\gposet_kP}$ with the map induced by $\pr$.  If $Y^\circ$ is the universal cover of $X^\circ$ the domain is a forest (see \ref{fig:second}).  

\medskip

\begin{figure}[h]
\begin{picture}(0,0)%
\includegraphics{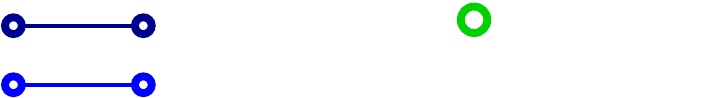}%
\end{picture}%
\setlength{\unitlength}{4144sp}%
\begingroup\makeatletter\ifx\SetFigFont\undefined%
\gdef\SetFigFont#1#2#3#4#5{%
  \reset@font\fontsize{#1}{#2pt}%
  \fontfamily{#3}\fontseries{#4}\fontshape{#5}%
  \selectfont}%
\fi\endgroup%
\begin{picture}(5416,735)(1654,-1062)
\put(3151,-511){\makebox(0,0)[lb]{\smash{{\SetFigFont{12}{14.4}{\rmdefault}{\mddefault}{\updefault}{\color[rgb]{0,0,0}transversal link graph}%
}}}}
\put(3151,-961){\makebox(0,0)[lb]{\smash{{\SetFigFont{12}{14.4}{\rmdefault}{\mddefault}{\updefault}{\color[rgb]{0,0,0}geometric link graph of $0$-cubes}%
}}}}
\put(5671,-511){\makebox(0,0)[lb]{\smash{{\SetFigFont{12}{14.4}{\rmdefault}{\mddefault}{\updefault}{\color[rgb]{0,0,0}removed $0$-cubes}%
}}}}
\end{picture}%
	\caption{Legend for {\ref{fig:first}} and {\ref{fig:second}}}
\end{figure}

\begin{figure}[ht]
	\hskip-1cm\begin{subfigure}{0.3\textwidth}
		\includegraphics[width=1.0\textwidth]{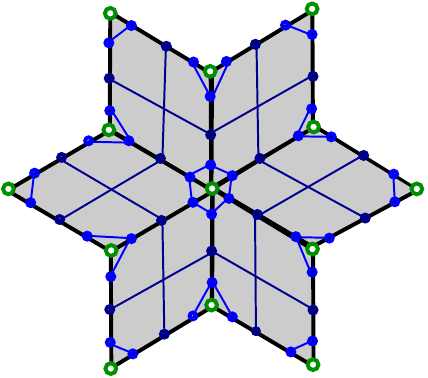}%
		\caption{}
	\end{subfigure}
	\hskip3cm
	\begin{subfigure}{0.3\textwidth}
		\includegraphics[width=0.8\textwidth]{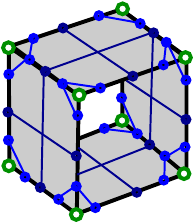}%
		\caption{}
	\end{subfigure}
	\caption{Two pieces of moment-angle complexes $X_K^{\geq 1}$}
	\label{fig:first}
\end{figure}

\begin{figure}[ht]
	\hskip-2cm\begin{subfigure}{0.3\textwidth}
		\includegraphics[width=1.0\textwidth]{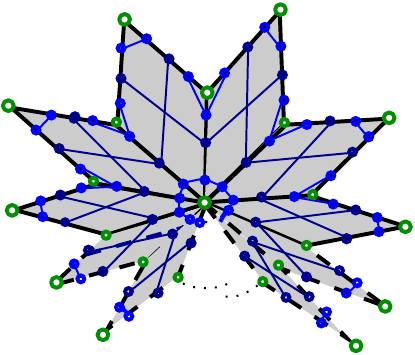}%
		\caption{}
	\end{subfigure}
	\hskip2cm
	\begin{subfigure}{0.3\textwidth}
		\includegraphics[width=1.4\textwidth]{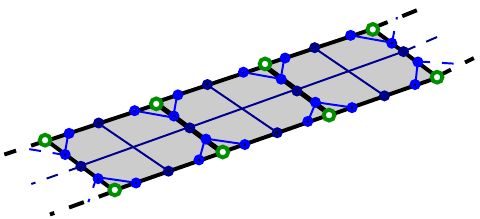}%
		\caption{}
	\end{subfigure}
	\caption{Universal cover of $X_K^{\geq 1}$ for the pieces from \ref{fig:first}}
	\label{fig:second}
\end{figure}

In any case the spectrum of the former contains that of the latter and the 
spectral hypothesis of \ref{cor:cubicalgarland} applies to the branched cover 
$Y$ only if it does to $X$. We conjecture a possible remedy for this 
after we discuss $k$-monodromy freeness of $k$-branched covers.

We claim that $k$-monodromy freeness of a $(k+1)$ dimensional cubical complex can be achieved with a $k$-branched 
cover of degree at most the order $2^{k+1}\,(k+1)!$ of the automorphism 
group of a $(k+1)$-cube.

Consider for a cubical complex $X$ of dimension $k+1$ with 
face poset $Q$ the bipartite graph 
$\Lambda=(\Lambda^{k+1}_0=Q^{k+1}, \Lambda^k_0=Q^k,\Lambda_1=\{(q^{k+1}>q^k)\}\subseteq Q^{k+1}\times Q^k)$ of the rank selected poset 
$Q^{k+1,k}$. There is a natural connection on this graph which chooses an 
isomorphism of the $(k+1)$-cubes $|Q_{\leq p}|$ and $|Q_{\leq q}|$ for any path in $\Lambda$ between vertices $p$ and $q$ in 
$\Lambda^{k+1}_0$ and hence if a basepoint $p\in\Lambda_0^{k+1}$ is fixed a map 
$\beta$ from loops in $\Lambda$ based at $p$ to the automorphism 
group $B_{k+1}$ of the $(k+1)$-cube  $|Q_{\leq p}|$.  
There is also the standard map $\pr$ from such loops to $\pr_1(X^{\geq k})$ 
and a group homomorphism $\nu:\pr_1(X^{\geq k})\rightarrow B_{k+1}$ with 
$\nu\circ \pr=\beta$.  For any subgroup $\tau$ of $\pr_1(X^{\geq k})$ there 
is an associated $k$-branched cover of $Q$ with degree the index of $\tau$ in 
$\pr_1(X^{\geq k})$ which is $k$-monodromy-free exactly if $\tau$ is 
contained in the kernel of $\nu$.

\begin{Conjecture}
  For every $k>0$ there is $d$ so that if $Q$ is a finite cubical poset of 
  dimension $k+1$ and every $k$-face is contained in at least $d$ facets 
  then there is a pair of $k$-branched covers $Q \leftarrow P \rightarrow R$ 
  so that the Garland poset of $R$ satisfies the hypotheses of \ref{cor:cubicalgarland}.
\end{Conjecture}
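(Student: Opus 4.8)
The plan is to realize the span $Q \leftarrow P \rightarrow R$ in two steps: first a cheap branched cover that makes everything $k$-monodromy-free, then an \emph{expander replacement} obtained by passing to a deep $k$-branched cover of $Q$ and then down to a cubical quotient of it; the second step is where essentially all the work and all the difficulty lie. Two facts from the surrounding text are the starting point. Since the link graph of a $k$-branched cover covers the link graph of the base, passing \emph{up} a branched cover only enlarges the spectrum of $\Gamma_{\gposet_k(-)}$, so the spectral gap can be improved only by passing to a \emph{quotient} of a cover. And $k$-monodromy-freeness is cheap: it can be forced by a $k$-branched cover of degree at most the order $2^{k+1}(k+1)!$ of the automorphism group of a $(k+1)$-cube. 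So I would first replace $Q$ by that bounded-degree monodromy-free cover, absorbing it into the final span by composing branched covers, after which it suffices to find one $k$-branched cover $P \to Q$ and a group $\Gamma$ of \emph{cubical} automorphisms of $P$ so that $R := P/\Gamma$ is finite, still $k$-monodromy-free, and has every component of $\Gamma_{\gposet_kR}$ with normalized positive Laplacian spectrum in $\{0\}\cup(\frac{2k}{2k+1},2]$; then \ref{cor:cubicalgarland}, through \ref{prop:garlandstructure} and \ref{thm:garland}, applies to $R$.

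For the second step I would imitate the construction of Ramanujan graphs as quotients of regular trees by arithmetic lattices. After one more branched cover I would reduce to the case that each component of $\Gamma_{\gposet_kQ}$ is $d$-regular, or, more honestly, work throughout with the notion of being Ramanujan \emph{relative to the universal cover}, which cannot be avoided since cubical links need not be regular. I would take $P$ to be a sufficiently deep finite $k$-branched cover of $Q$, corresponding to a deep finite-index subgroup of $\pi_1(X_Q^{\geq k})$, so that the components of $\Gamma_{\gposet_kP}$ have large girth and admit a large group of cubical automorphisms, and then look among the quotients $R = P/\Gamma$ for one whose link graph $\Gamma_{\gposet_kR}$ has all nonzero normalized-Laplacian eigenvalues above $\frac{2k}{2k+1}$. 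The graph-theoretic plausibility is clear: $\Gamma_{\gposet_kQ}$ and any Ramanujan graph sharing its universal cover have a common finite cover by Leighton's theorem, and Ramanujan graphs of degree $d$ have spectral gap approaching (but, by Alon--Boppana, never reaching) $1$, hence comfortably above $\frac{2k}{2k+1}$ once $d \ge d_0(k)$ with $d_0(k)$ of order $k^2$; the Alon--Boppana lower bound recorded after \ref{cor:cubicalgarland} shows some quadratic growth in $k$ is necessary, and the standard Ramanujan constructions supply the upper bound. With such an $R$ in hand, finiteness of $R$ supplies local and transversal finiteness, the quotient is arranged to keep $R$ $k$-monodromy-free, and the gap hypothesis of \ref{cor:cubicalgarland} holds for every component of $\Gamma_{\gposet_kR}$ at once --- both the geometric links and the transversal links, which sit inside the same graph and are therefore replaced simultaneously.

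The hard part is promoting this graph-theoretic picture to a cubical, equivariant one: exhibiting a group $\Gamma$ of \emph{cubical} automorphisms of the cover $P$ whose quotient link graph is a genuine expander and not merely large-girth. That amounts to finding a cocompact subgroup of the automorphism group of the universal cover of $X_Q^{\geq k}$ which behaves like an arithmetic Ramanujan lattice in $\mathrm{Aut}$ of a regular tree; but for general $Q$ that universal cover is not a Bruhat--Tits building and its automorphism group is not a $p$-adic Lie group, so neither the arithmetic construction nor an off-the-shelf analogue of Leighton's theorem for complexes is available --- indeed Leighton's theorem already fails for general $2$-complexes --- and one will likely need either a nonpositive-curvature or specialness hypothesis on $X_Q^{\geq k}$, or a direct combinatorial construction of the span that sidesteps the topological detour. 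Two further substantive points: the non-regular case genuinely requires a relative notion of Ramanujan together with an existence theorem for finite graphs with a prescribed tree universal cover and near-optimal relative spectral gap; and the requirement that $R$ be simultaneously $k$-monodromy-free and expanding couples the geometric and transversal components, so the monodromy-killing cover of the first step and the expander quotient of the second cannot be chosen independently. Finally one should track the constant: $d_0(k)$ must grow at least quadratically in $k$ by Alon--Boppana, and a construction of this shape ought to achieve $d_0(k) = O(k^2)$.
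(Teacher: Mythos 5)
This statement is posed in the paper as a conjecture; the authors give no proof, so there is nothing to compare your argument against except their surrounding discussion, which your first paragraph faithfully reconstructs (spectra only grow under $k$-branched covers, so one must descend to a quotient of a cover; $k$-monodromy-freeness is achievable by a cover of degree at most $2^{k+1}(k+1)!$; Alon--Boppana forces $d=\Omega(k^2)$). Your proposal is therefore best read as a strategy outline rather than a proof, and you are candid about where it breaks: the entire content of the conjecture is the existence of the right-hand leg $P\rightarrow R$, i.e.\ a finite cubical quotient of a finite $k$-branched cover whose link graph $\Gamma_{\gposet_kR}$ has every nonzero normalized Laplacian eigenvalue above $\frac{2k}{2k+1}$, and none of the tools you invoke actually delivers this. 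Leighton's common covering theorem is a statement about graphs and, as you note, fails for general $2$-complexes; applying it to the link graphs alone produces a graph-theoretic common cover with no reason to be realized by a cubical map of posets, let alone by a $k$-branched cover in the paper's sense. Likewise the Ramanujan constructions (arithmetic lattices, interlacing families) produce graphs, not cubical complexes carrying a compatible poset structure in which those graphs appear as $\Gamma_{\gposet_kR}$. So the step ``find a group $\Gamma$ of cubical automorphisms of $P$ with $R=P/\Gamma$ an expander'' is not a reduction of the conjecture to known results; it is a restatement of it.

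Two further unverified points compound this. First, a quotient of a $k$-monodromy-free complex by a group of cubical automorphisms need not remain $k$-monodromy-free (the quotient can create new loops in the bipartite graph $\Lambda$ whose connection holonomy is nontrivial), so your claim that the two steps can be ``absorbed into the final span by composing branched covers'' needs an argument, and you yourself observe the two requirements are coupled. Second, the non-regular case requires an existence theorem for finite graphs with prescribed universal cover and near-optimal relative spectral gap, which you flag but do not supply; for irregular trees this is a genuinely open direction, not a routine extension of the regular Ramanujan theory. In short: the proposal is a reasonable and well-informed plan of attack, consistent with what the authors appear to intend, but it does not prove the conjecture, and the paper does not either.
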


Note that the Alon-Boppona bound suggests that $d=\Omega(k^2)$ and 
its asymptotic sharpness that $d=\Theta(k^2)$.   


\subsection{Random simplicial and cubical complexes}

We define and consider a model of random $\Delta$-complexes 
$Y^\Delta_{h,d,k}$ and two related models of random cubical complexes 
$Y^\Box_{h,d,k}$  and $Z^\Box_{h,d,k}$ of dimension $k+1$ obtained by 
starting with $hd$ disjoint $(k+1)$-simplices or $(k+1)$-cubes and 
randomly identifying groups of $d$ parallel facets.

More notationally:
  Write $\Box^ {k+1} = [0,1]^{k+1}$ for the standard $(k+1)$-cube and 
	$$\Delta^{k+1}=\big\{\,t = (t_1,\ldots, t_{k+2}) \in \Box^{k+2}~\big|\,\sum_{K=1}^{k+2} t_K=1\,\big\}$$ for the standard $(k+1)$-simplex.  
Let us first describe the simplicial model.
Set 
	\begin{align*} Q^\Delta_{h,d,k}=\big\{\,  & q:[hd]\times[k+2]\rightarrow [h]\times[d] \,\big|\,\text{for all }   K\in[k+2] \\
		& \text{the restriction of } q \text{ to } [hd]\times\{K\} \text{ is bijective } \,\big\}.
  \end{align*}

  The set $[hd]$ indices $hd$ disjoint copies of $\Delta^{k+1}$
  and the set $[k+2]$ indices the $k+2$ facets of a $(k+1)$-simplex.
  If $q\in Q^\Delta_{h,d,k}$ write 
  $$X^\Delta_q=(\Delta^{k+1}\times[hd])\slash\sim_q$$ where 
    for $t = (t_1,\ldots, t_{k+2}) \in \Delta^ {k+1}$ we set
			$(t,F)\sim_q(t,F')$ if there is $K\in[k+2]$ with 
			$t_K=0$ and $q(F,K)_{\{1\}}=  q(F',K)_{\{1\}}$.  
  The condition that the restriction of $q \in Q^ \Delta_{h,d,k}$
  to $[hd]\times\{K\}$ is bijective for each $K \in [k+2]$
  is bijective implies that $d$ simplices are glued along each facet.
  In \ref{fig:exX} an example of $X^ {\Delta^ 2}_q$ is given for
  $h=d =2$ and $k=1$. Thus there are $2\cdot 2 = 4$ copies of 
  $\Delta^ 2$ with
  the values of a function $q$ specifies on the facets.
  The colors then indicate which identification have to be perfomred in order 
  to construct $X^\Delta_q$. 

  \begin{figure}
	  \centering
 \begin{tikzpicture}[
    every node/.style={font=\small},
    tri/.style={draw=black, fill=gray!35, thick}
]

\coordinate (A1) at (-3,0);
\coordinate (B1) at (-1,0);
\coordinate (C1) at (-2,1.6);
\filldraw[tri] (A1) -- (B1) -- (C1) -- cycle;
	 \node at ($(A1)!0.5!(B1)+(0,-0.25)$) {$q(1,1) = (1,1)$};
	 \node at ($(B1)!0.5!(C1)+(0.25,0.05)$) [right] {$\genfrac{}{}{0pt}{}{q(1,2)}{= (1,2)}$};
	 \node at ($(C1)!0.5!(A1)+(-0.25,0.05)$) [left] {$q(1,3) = (2,1)$};
\draw[green,very thick] (A1) -- (B1);
\draw[yellow,very thick] (A1) -- (C1);
\draw[brown,very thick] (B1) -- (C1);

\coordinate (A2) at (3,0);
\coordinate (B2) at (5,0);
\coordinate (C2) at (4,1.6);
\filldraw[tri] (A2) -- (B2) -- (C2) -- cycle;
	 \node at ($(A2)!0.5!(B2)+(0,-0.25)$) {$q(2,1)= (2,2)$};
	 \node at ($(B2)!0.5!(C2)+(0.25,0.05)$) [right] {$\genfrac{}{}{0pt}{}{q(2,2)}{=(1,1)}$};
	 \node at ($(C2)!0.5!(A2)+(-0.25,0.05)$) [left] {$q(2,3) = (1,2)$};
\draw[blue,very thick] (A2) -- (B2);
\draw[red,very thick] (A2) -- (C2);
\draw[brown,very thick] (B2) -- (C2);

\coordinate (A3) at (-3,-2.6);
\coordinate (B3) at (-1,-2.6);
\coordinate (C3) at (-2,-1);
\filldraw[tri] (A3) -- (B3) -- (C3) -- cycle;
	 \node at ($(A3)!0.5!(B3)+(0,-0.25)$) {$q(3,1)= (2,1)$};
	 \node at ($(B3)!0.5!(C3)+(0.25,0.05)$) [right]{$\genfrac{}{}{0pt}{}{q(3,2)}{=(2,1)}$};
	 \node at ($(C3)!0.5!(A3)+(-0.25,0.05)$) [left] {$q(3,3) = (2,2)$};
\draw[blue,very thick] (A3) -- (B3);
\draw[yellow,very thick] (A3) -- (C3);
\draw[violet,very thick] (B3) -- (C3);

\coordinate (A4) at (3,-2.6);
\coordinate (B4) at (5,-2.6);
\coordinate (C4) at (4,-1);
\filldraw[tri] (A4) -- (B4) -- (C4) -- cycle;
	 \node at ($(A4)!0.5!(B4)+(0,-0.25)$) {$q(4,1) = (1,2)$};
	 \node at ($(B4)!0.5!(C4)+(0.25,0.05)$) [right] {$\genfrac{}{}{0pt}{}{q(4,2)}{= (2,2)}$};
	 \node at ($(C4)!0.5!(A4)+(-0.25,0.05)$) [left] {$q(4,3)= (1,1)$};
\draw[green,very thick] (A4) -- (B4);
\draw[red,very thick] (A4) -- (C4);
\draw[violet,very thick] (B4) -- (C4);

\end{tikzpicture}
	  \caption{Example of $X^{\Delta}_q$ for $h = d = 2$} 
	  \label{fig:exX}
  \end{figure}

  Write $\tau:X^\Delta_q\rightarrow\Delta^{k+1}$ for the obvious projection.

  Similarly, we define models for cubical complexes.
  We set 
  {\small
  \begin{align*} 
	  Q^\Box_{h,d,k}=\big\{\, & q:[hd]\times[k+1]\times \{0,1\}\rightarrow [h]\times[d]\,\big|\\&  \bullet \text{for all } (K,E)\in [k+1]\times \{0,1\} \\
	  & ~\text{ the restriction of } q \text{ to } [hd]\times\{K\}\times\{E\} \text{ is bijective } \,\big\}
  \end{align*}
  }
  and 
  {\small
  \begin{align*} 
	  R^\Box_{h,d,k}=\big\{\, & r:[hd]\times[k+1]\times \{0,1\}\rightarrow [h]\times[2d] \,\big|\\& \bullet \text{for all } (H,K)\in [k]\times[k+1] \\ & ~\text{ the restriction of } q \text{ to } [hd]\times\{K\}\times\{0,1\} \text{ is bijective,} \\ & \bullet \big|\,\{(A,K,0)\,|\,r(A,K,0)_{\{1\}}=H\}\,\big|=d, \\ & \bullet r(B,K,0)_{\{1\}}\not= r(B,K,1)_{\{1\}} \text{ for all } (B,K)\in [hd]\times [k+1] \,\big\}
  \end{align*}
  }
  
  If $q\in Q^\Box_{h,d,k}$ or $R^\Box_{h,d,k}$ write $$X^\Box_q=(\Box^{k+1}\times[hd])\slash\sim_q$$ where
        for $t = (t_1,\ldots, t_{k+1}) \in \Box^ {k+1}$ we set
			$(t,F)\sim_q(t,F')$ if there is $K\in[k+1]$ with $t_K= E \in \{0,1\}$ and $q(F,K,E)_{\{1\}}=q(F',K,E)_{\{1\}}$.  
			Write $\tau$ for the map from $X^\Box_q$ to $\Box^{k+1}$ in the former case and to the torus $\mathbb{T}=\mathbb{T}^{k+1}$ obtained by identifying opposite facets of $\Box^{k+1}$ in the latter for the obvious projections.  

   Write $Y^\Delta_{h,d,k}$ and $Y^\Box_{h,d,k}$ for the uniform measures on the sets 
   $\big\{\,X^\Delta_q\,\big|\,q\in Q^\Delta_{h,d,k}\,\big\}$ and 
   $\big\{\,X^\Box_q\,\big|\,q\in Q^\Box_{h,d,k}\,\big\}$ respectively.   
   Write $Z^\Box_{h,d,k}$ for the measure on $\{X^\Box_q|q\in R^\Box_{h,d,k}\}$ weighted by one over the number of orientations of $\Gamma_{\gposet_kX^\Box_q}$ which are flows (indegree equal to outdegree).  
 
The following corollary uses the simplicial and the cubical Garland method 
together with spectral results by Pinsker \cite{P} and Friedman \cite{F} as 
in Bordenave \cite{B} which state that for uniformly chosen 
$d$-regular (bipartite) graphs the spectrum of the left normalized graph 
Laplacian is aas in 
$\{0,2\}\cup(1-\frac{2}{\sqrt{d}},1+\frac{2}{\sqrt{d}})$ or in the 
normalization of \cite{B}, Theorem 1, and \cite{BDH}, Theorem 3.2 (i), with 
$\epsilon=2(\sqrt{d}-\sqrt{d-1}-\delta$ the spectrum of the adjacency matrix is aas in $\{-d,d\}\cup[-2\sqrt{d}+\delta,2\sqrt{d}-\delta]$.  

	\medskip

	\begin{cor} ~~
		\begin{itemize}
			\item[(i)]
				If $d\geq 4(k+1)^2$ then $$\lim_{h\rightarrow\infty}\Pr_{X\in Y^\Delta_{h,d,k}}(\widetilde{\Hom}^k(X)=\{0\})=1.$$
		\item[(ii)] If $d\geq 4(2k+1)^2$ then
			\smallskip
			\begin{itemize}
				\item[(a)]
					$$\lim_{h\rightarrow\infty}\Pr_{X\in Y^\Box_{h,d,k}}(\widetilde{\Hom}^k(X)=\{0\})=1,$$
				\item[(b)]
					$$\lim_{h\rightarrow\infty}\Pr_{X\in Z^\Box_{h,d,k}}(\widetilde{\Hom}^k(X)=\QQ^{k+1})=1.$$
			\end{itemize}
			\end{itemize}
\end{cor}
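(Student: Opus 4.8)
The plan is to apply Corollaries \ref{cor:simplicialgarland} and \ref{cor:cubicalgarland} at level $k$ to the face poset $P$ of (a sample of) each random complex, which splits the problem into two essentially independent tasks. Task (A): show that with probability tending to $1$ (as $h\to\infty$) every connected component of the relevant link graph $\Gamma_{\gposet_kP}$ has normalized Laplacian spectral gap strictly above $\tfrac{k}{k+1}$ in the simplicial case and above $\tfrac{2k}{2k+1}$ in the cubical case. Task (B): evaluate the obstruction spaces $L_{k,P}$ (and, in the cubical case, $T_{k,P}$) and show that with probability tending to $1$ they are $\{0\}$ for $Y^\Delta_{h,d,k}$ and $Y^\Box_{h,d,k}$, while $L_{k,P}=\{0\}$ and $T_{k,P}\cong\QQ^{k}$ for $Z^\Box_{h,d,k}$. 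Once both are in place the three claimed limits follow at once from the Garland corollaries.

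For (A) the first step is to unwind the Garland poset construction of \ref{sec:4} for these models and identify $\Gamma_{\gposet_kP}$, component by component, as a superposition of $d$ independent uniformly random bijections among the blocks of size $d$ produced by the maps $q_K$ (resp.\ $q_{(K,E)}$) — i.e.\ as a disjoint union of instances of the configuration model for random $d$-regular (multi)graphs, with the geometric and transversal components corresponding to different bundles of these bijections. With this identification the spectral input is the Pinsker--Friedman bound \cite{P,F}: with probability $\to1$ the second largest eigenvalue of the normalized adjacency operator of each such component is at most $\tfrac{2\sqrt{d-1}}{d}+o(1)$, so its Laplacian spectral gap is at least $1-\tfrac{2\sqrt{d-1}}{d}-o(1)$, and in particular the component is connected so that $\alpha_{A_a}\neq0$. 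The arithmetic then closes: $1-\tfrac{2\sqrt{d-1}}{d}>\tfrac{k}{k+1}$ is equivalent to $d^2>4(k+1)^2(d-1)$, whose larger root $2(k+1)\big((k+1)+\sqrt{(k+1)^2-1}\big)$ is strictly below $4(k+1)^2$, so $d\ge4(k+1)^2$ suffices with a fixed positive margin that absorbs the $o(1)$; the cubical inequality $1-\tfrac{2\sqrt{d-1}}{d}>\tfrac{2k}{2k+1}$ is handled identically and $d\ge4(2k+1)^2$ suffices. A union bound over the finitely many (in $h$) component types — or, if the number of components grows, over the growing family, against the failure probability of \cite{F} — completes (A).

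For (B) one uses the long exact sequences set up in \ref{sec:4}. In each model the geometric link graphs are connected with probability $\to1$, whence $\widetilde{\Hom}^0(\Delta_{P_{\ge p}})=0$ for every relevant $p$ and therefore $L_{k,P}=\{0\}$; for $Y^\Delta_{h,d,k}$ there are no transversal terms and Corollary \ref{cor:simplicialgarland} gives $\widetilde{\Hom}^k=\{0\}$. For $Y^\Box_{h,d,k}$ one checks, using that the gluing never pairs opposite facets of a common cube, that each transversal component of $\Gamma_{\gposet_kP}$ is connected and carries no cohomology, so $T_{k,P}=\{0\}$ and Corollary \ref{cor:cubicalgarland} yields $\widetilde{\Hom}^k=\{0\}$. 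For $Z^\Box_{h,d,k}$ the conditioning is precisely what makes each of the $k$ coordinate directions contribute one genuinely nontrivial transversal component — exactly the mechanism illustrated for the cubical torus in the Example — and a direct computation of the connecting homomorphism then shows $T_{k,P}\cong\QQ^{k}$, so $\widetilde{\Hom}^k=\QQ^{k}$.

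\textbf{The main obstacle.} I expect the distributional bookkeeping in step (A) — proving that the link graph really is the near-Ramanujan random regular graph model to which \cite{P,F} applies, i.e.\ that the $d$ bijections arising from the $q_K$ are independent and uniform and that the (geometric versus transversal) component structure is as described, together with a usable count of components — to be the technically delicate part, more so than the eigenvalue estimate itself, which is quoted. A secondary difficulty is the $Z^\Box$ case of (B): cleanly separating the vanishing geometric contribution from the transversal one and verifying that the conditioning produces exactly $\QQ^{k}$ rather than something larger or smaller.
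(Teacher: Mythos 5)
Your overall strategy --- verify the spectral hypothesis via Pinsker/Friedman and then compute the obstruction spaces $L_{k,P}$ and $T_{k,P}$ --- is the same as the paper's, and your Task (A), including the arithmetic showing that $d\ge 4(k+1)^2$ (resp.\ $d\ge 4(2k+1)^2$) clears the Ramanujan bound with a fixed margin, matches what the paper does. The gaps are in Task (B), and both concern the transversal classes. First, your argument that $T_{k,P}=\{0\}$ for $Y^\Box_{h,d,k}$ is not valid: you claim each transversal component ``is connected and carries no cohomology, so $T_{k,P}=\{0\}$.'' But $T_{k,P}$ is the image under the connecting homomorphism of the generators of the cokernel $B^{-1}_{\gposet_kP}/C^{k-1}$, and every transversal component --- connected or not --- contributes one such generator, since $\pi(a)=\varphi\notin P^{k-1}$ puts it outside the image of $\pi^*$; connectivity says nothing about whether its image in $\widetilde{\Hom}^k(\Box_P)$ vanishes. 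The torus example in the paper is an explicit counterexample to your reasoning: there the transversal components are connected $4$-cycles, yet their classes generate $\widetilde{\Hom}^1(T^\Box)=\QQ^2\neq\{0\}$. (You also invoke ``the gluing never pairs opposite facets of a common cube'' for $Y^\Box$, but that is precisely the conditioning that defines $Z^\Box$, not a property of $Y^\Box$.) The paper's actual argument is that the transversal classes of $X^\Box_q$ factor through the collapse map to a single cube $\Box^k$, which is contractible, so they vanish.

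Second, for $Z^\Box_{h,d,k}$ you assert that ``a direct computation of the connecting homomorphism'' gives $T_{k,P}\cong\QQ^{k}$, and you flag this only as a secondary difficulty --- but it is the substantive step, and you have not supplied it. The paper proves nontriviality and independence of the $k$ transversal classes by mapping $X^\Box_q$ to the torus obtained from a single cube by identifying opposite faces, pushing forward a maximal-support orientation class $\omega$ of a codimension-one slice $X^\Box_{\pi_K(q)}$, pairing $i_*(\omega)$ against the pullback $\tau^*(\gamma_K)$ of the $K$-th torus class, and arguing that by symmetry the signs in this pairing are uniformly distributed, so the sum is asymptotically almost surely nonzero. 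Some argument of this kind (or another mechanism certifying nonvanishing and independence) is needed; without it your proof of (ii)(b) --- and, by the first point, the justification of (ii)(a) --- is incomplete. Task (A) and the simplicial case (i) are fine as you present them.
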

\begin{proof}
Fix $X$ drawn from one of the three distributions and write $P$ for the face poset in the simplicial case or the nonempty face poset in the cubical ones and $\Gamma=\Gamma_{\gposet_kP}$ which is a regular graph of degree $d$ and decomposes into disjoint geometric link graphs indexed by the $(k-1)$-faces of the codomain of $\tau$ of which there are $\binom{k+2}{2}$, $4\binom{k+1}{2}$ and $\binom{k+1}{2}$ respectively and in the two cubical cases transversal link graphs of which there are $k+1$.  All of these graphs are bipartite except for the transversal ones for the $Z$ case which have no loops.  It is also easy to count the vertices of which there are $2h(k+2)$ and $2h(k+1)$ for the geometric ones and $2h(+1)$ and $h(k+1)$ for the transversal ones.  In every case the distributions are uniform either on bipartite regular graphs or in the case of the transversal components of $Z$ on loopless regular graphs so the required aas in $h$ spectral gaps follow from the cited spectral theorem.  
  For the cubical cases the $k$-monodromy free property is immediate by 
  construction since a choice of collar can just be pulled back using $\tau$.  

  Together these imply that \ref{cor:simplicialgarland} applies in the 
  $Y^\Delta_{h,d,k}$ case and \ref{cor:cubicalgarland} applies for the two cubical ones. 

  Note that the spectral conditions imply also that the associated graphs 
  are connected so the result holds for $Y^\Delta_{h,d,k}$.  

  It remains to show that in the $Y^\Box_{h,d,k}$ case the image $T_{k,P}$ 
  of the transversal classes is trivial while in the $Z^\Box_{h,d,k}$ case 
  the classes of the $k+1$ components are non-trivial and independent.

  For the former note that $T_k$ (and $L_k$) are natural and hence $T_{k,P}$ is the image under $\tau^\ast$ of $T_{k,\bigwedge^{k+1}}=0$.

  For the latter a similar argument either directly or by extending the notion of Garland posets to small categories in which every endomorphism and every isomorphism is an identity gives that $T_{k,P}$ is the image under $\tau^\ast$ of $\Hom^k(\TT^{k+1})\cong \QQ^{k+1}$ and it remains to show this map is injective.

For each $K\in[k+1]$ write $\gamma_K\in\Hom^k(\TT^{k+1})$ for the 
class supported cellularly by the $K$\textsuperscript{th} coordinate 
hyperplane.  Together these form a basis. Write $q|_K$ for the restriction of 
$q$ to $[hd]\times([k]-\{K\})\times\{0,1\}$ and 
$i^K:X^\Box_{q|_K}\rightarrow X^\Box_q$ for the (hyperplane) embedding 
taking each facet to the a meridian of a facet perpendicular to the 
$K$\textsuperscript{th} direction. The cellular chain $\omega_K$ with 
weight one on each cell is closed by the construction of $R^\Box$ and 
	the pairing 
	$\big(\,\tau^\ast(\gamma_{K'}),i^{K}_\ast(\omega_K)\,\big)$ yields
	$hd$ if $K=K'$ and zero otherwise.  

\end{proof}

In analogy with $Y^\Box_{h,d,k}$ and the cube, or $Z^\Box_{h,d,k}$ 
and the torus,
define for any pure $(k+1)$-dimensional cubical complex $\TT$ 
the uniform measure 
on $(k+1)$-dimensional and $d$-regular cubical complexes with a cubical map
$\tau$ to $\TT$ which is generically $hd$ to $1$.

\begin{Conjecture}
	If $d \geq 4(2k+1)^2$ then 
	in $h$ aas the induced map $\tau^*$ on the $k$\textsuperscript{th} cohomology groups 
	is an isomorphism to $\widetilde{\Hom}^ k(\TT)$.
\end{Conjecture}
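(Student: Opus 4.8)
The plan is to run the cubical Garland criterion \ref{cor:cubicalgarland} on a complex $X$ sampled from $T^\Box_{h,d}$, with face poset $P$ and defining generically $hd$-to-$1$ cubical map $\tau : X \to T$, so that the statement reduces to two things: checking that the spectral gap and monodromy hypotheses of \ref{cor:cubicalgarland} hold asymptotically almost surely (aas) in $h$, and identifying the resulting description $\widetilde{H}^k(X) = L_{k,P} + T_{k,P}$ with $\ima \tau^*$.

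For the hypotheses, the construction of $T^\Box_{h,d}$ glues the $hd$ boundary copies of each facet of $T$ by independent uniformly random partitions into blocks of size $d$, so, exactly as in the cube and torus cases treated above, the link graph $\Gamma_{\gposet_k P}$ splits into geometric components and transversal components, each a random $d$-regular graph on $\Theta(h)$ vertices --- bipartite for the geometric links and, as in the $Z^\Box$ case, possibly non-bipartite for the transversal ones. Pinsker's bound \cite{P} in the bipartite case and Friedman's theorem \cite{F} in general then give spectral gap larger than $\frac{2k}{2k+1}$ aas once $d \geq 4(2k+1)^2$, and a union bound over the components (whose number is controlled just as in the $Z^\Box$ case) makes this simultaneous. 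Since the gluings of $T^\Box_{h,d}$ are standard facet identifications carrying no monodromy, $X$ is $k$-monodromy-free, so \ref{cor:cubicalgarland} applies and yields $\widetilde{H}^k(X) = L_{k,P} + T_{k,P}$ aas.

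It then remains to match this with $\ima \tau^*$. A random $d$-regular graph with positive spectral gap is connected, so aas every geometric link component is connected, whence $L_{k,P} = 0$ and $\widetilde{H}^k(X) = T_{k,P}$. Surjectivity of $\tau^*$ then becomes the claim that every transversal connecting-homomorphism class lies in $\ima \tau^*$ --- the counterpart of the fact used for $Z^\Box$ that the transversal classes factor through the map to the torus --- which I would establish by tracking, via $\tau$, how a transversal component of $\gposet_k P$ and its class sit over $T$. For injectivity I would imitate the $Z^\Box$ pairing argument: fix a basis of $\widetilde{H}^k(T)$; for each basis class $\gamma$ choose a $k$-cycle $\zeta$ in $T$ with $\langle \gamma, \zeta \rangle \neq 0$; assemble a $k$-cycle in $X$ from the $\tau$-preimages of the cubes of $\zeta$, weighted by orientation classes of codimension-one subcomplexes of $X$ as in that proof; and note that by the symmetry of the random gluing the signs of the resulting pairing coefficients are asymptotically independent uniform $\pm 1$, so the signed sum is aas nonzero. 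Running this over the basis gives injectivity, and with surjectivity this shows $\tau^*$ is an isomorphism aas.

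The main obstacle is this last step. In the torus case the codimension-one slices and the relevant signs are transparent, but for an arbitrary $(k+1)$-dimensional $T$ one must show the supports of the chosen orientation classes grow without bound in $h$ and that the induced signs are independent enough for a second-moment argument to force aas nonvanishing of the pairing; precisely identifying $T_{k,P}$ with $\ima \tau^*$ for general $T$ is of comparable difficulty. A secondary technical point is possible $k$-monodromy of $T$ itself: even though $X$ is monodromy-free, when $T$ is twisted one must check that the transversal classes of $X$ still capture the degree-$k$ (twisted) cohomology of $T$, presumably by comparison with the local-system version of \ref{cor:cubicalgarland} noted after that corollary.
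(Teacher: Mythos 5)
The statement you are addressing is stated in the paper as a \emph{Conjecture}: the authors give no proof of it, so the only available comparison is with their proof of the preceding corollary for $Y^\Delta_{h,d,k}$, $Y^\Box_{h,d,k}$ and $Z^\Box_{h,d,k}$, which is exactly the template you follow. Your plan is the natural one, but it has genuine gaps at precisely the points where passing from the cube and the torus to an arbitrary pure $(k+1)$-dimensional $T$ is hard --- which is presumably why the authors left the statement open.

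Concretely: first, you assert that the link graph of $\gposet_kP$ splits into geometric and transversal components, each a random $d$-regular graph on $\Theta(h)$ vertices, ``exactly as in the cube and torus cases.'' For general $T$ this is not automatic: a transversal component projects under $\tau$ to a chain of parallel $k$-faces of $T$, and its count, size and distribution (a nearly uniform random regular graph versus, say, a random cover of a fixed transversal graph of $T$) depend on the global combinatorics of $T$; without pinning this down you can invoke neither Pinsker nor Friedman, nor count the transversal classes. Second, your claim that $X$ is $k$-monodromy-free ``since the gluings carry no monodromy'' does not follow when $T$ itself is twisted; you acknowledge this but defer to a local-system version of \ref{cor:cubicalgarland} that the paper only alludes to and does not develop. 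Third, the identification $T_{k,P}=\ima\,\tau^*$ needs both that every transversal connecting-homomorphism class is pulled back from $T$ and that the number of independent transversal classes equals $\dim\widetilde{\Hom}^k(T)$; neither is addressed beyond analogy with the torus, where the count happens to work out, and the second-moment sign argument for injectivity requires support growth and independence properties you flag but do not establish. In short, your proposal is a plausible research program mirroring the authors' method for the corollary, not a proof of the conjecture.
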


\section*{Acknowledgments}
The authors thank the referee for their careful reading and many
helpful suggestions. In particular, we are grateful for pointing out
that \ref{cor:spectral} is implied by our results.  
The first author thanks Tali Kaufman for an interesting discussion
about the potential uses of the results from this paper.
We also thank Izhar Oppenheim for pointing out the need for 
transversal finiteness in \ref{cor:cubicalgarland}.

\end{document}